\numberwithin{equation}{section}
\newtheorem{theorem}{Theorem}[section]
\newtheorem{lemma}[theorem]{Lemma}
\newtheorem{corollary}[theorem]{Corollary}
\theoremstyle{definition}
\theoremstyle{remark}
\newtheorem{remark}[theorem]{Remark}
\newcommand{\rarrow}{\rightarrow}
\newcommand{\dis}{\displaystyle}
\newcommand{\be}{\begin{equation}}
\newcommand{\ee}{\end{equation}}
\newcommand{\bea}{\begin{eqnarray*}}
\newcommand{\eea}{\end{eqnarray*}}
\newcommand{\pa}{\partial}
\title{Inverse problem for a structural acoustic interaction}
\author[Shitao Liu]{Shitao Liu \\ [3mm] Department of Mathematics \\ University of Virginia \\ Charlottesville, VA 22904, USA 
\\ Email: sl3fa@virginia.edu}
\date{}
\begin{document}

\begin{abstract}
In this work, we consider an inverse problem of determining a source term for a structural acoustic partial differentia equation (PDE) model,
comprised of a two or three-dimensional interior acoustic wave equation coupled to a Kirchoff plate equation, with the coupling being accomplished
across a boundary interface. For this PDE system, we obtain the uniqueness and stability estimate for the source term from a single measurement
of boundary values of the ``structure''. The proof of uniqueness is based on Carleman estimate. Then, by means of an observability inequality and a
compactness/uniqueness argument, we can get the stability result. Finally, an operator theoretic approach gives us the regularity needed for the
initial conditions in order to get the desired stability estimate.
\end{abstract}

\maketitle

\textbf{Keywords}: Structural acoustic interaction, inverse problem, Carleman estimate, continuous observability inequality
\section{Introduction and Main Results}

\subsection{Statement of the Problem}

\medskip
Let $\Omega$ be an open bounded subset of $\mathbb{R}^2$ or $\mathbb{R}^3$ with
smooth boundary $\Gamma$ of class $C^2$, and we designate a nonempty
simply connected segment of $\Gamma$ as $\Gamma_0$ with then $\Gamma=\Gamma_0\cup\Gamma_1$ and $\Gamma_0\cap\Gamma_1=\emptyset$.
We consider here the following system comprised of a ``coupling'' between a wave equation and an elastic plate-like equation:
\begin{equation}\label{nonlinear}
\begin{cases}
z_{tt}(x,t) = \Delta z(x,t) + q(x)z(x,t) & \mbox{in } \Omega \times [0,T] \\
\frac{\pa z}{\pa\nu}(x,t) = 0 & \mbox{on } \Gamma_1\times[0,T] \\
z_t(x,t) = -v_{tt}(x,t)-\Delta^2 v(x,t)-\Delta^2 v_t(x,t) & \mbox{on } \Gamma_0\times[0,T] \\
v(x,t)=\frac{\pa v}{\pa \nu}(x,t)=0 & \mbox{on } \pa\Gamma_0\times[0,T] \\
\frac{\pa z}{\pa \nu}(x,t)=v_t(x,t) & \mbox{on } \Gamma_0\times[0,T] \\
z(\cdot,\frac{T}{2}) = z_0(x) & \mbox{in } \Omega \\
z_t(\cdot,\frac{T}{2}) = z_1(x) & \mbox{in } \Omega \\
v(\cdot,\frac{T}{2}) = v_0(x) & \mbox{on } \Gamma_0 \\
v_t(\cdot,\frac{T}{2}) = v_1(x) & \mbox{on } \Gamma_0
\end{cases}
\end{equation}
where the coupling occurs across the boundary interface $\Gamma_0$.
$[z_0,z_1,v_0,v_1]$ are the given initial conditions and $q(x)$ is a time-independent unknown coefficient.
For this system, notice that the map $\{q\}\to \{z(q),v(q)\}$ is nonlinear, therefore we consider the following
\textit{nonlinear inverse problem}:
Let $\{z=z(q), v=v(q)\}$ be the weak solution to system $\eqref{nonlinear}$. Under suitable geometrical
conditions on $\Gamma_1=\Gamma\setminus\Gamma_0$, is it possible to retrieve $q(x)$,
$x\in\Omega$, from measurement of $v_{tt}(q)$ on $\Gamma_0\times[0,T]$?
In other words, is it possible to recover the internal wave potential from
the observation of the acceleration of the elastic plate.

Our emphasis here that we determine the interior acoustic property from observing the acceleration of the
elastic wall (portion of the boundary), is not only due to physical consideration, but also to the implications
of such inverse type analysis related to the coupling nature of the structural acoustic flow. In many structural
acoustics applications, the problem of controlling interior acoustic properties is directly correlated with the
problem of controlling structural vibrations since the interior noise fields are often generated by the vibrations
of an enclosing structure. An important example of this is the problem of controlling interior aircraft
cabin noise which is caused by fuselage vibrations that are induced by the low frequency high magnitude exterior
noise fields generated by the engines.

The primary goal in this paper is to study the uniqueness and stability of
the interior time-independent unknown coefficient $q(x)$ in some appropriate function space.
More precisely, we consider the follow uniqueness and stability problems:

\medskip
\textbf{Uniqueness in the nonlinear inverse problem}

Let $\{z=z(q), v=v(q)\}$ be the weak solution to system $\eqref{nonlinear}$. Under
geometrical conditions on $\Gamma_1$, does the acceleration of the wall $v_{tt}|_{\Gamma_0\times
[0,T]}$ determine $q(x)$ uniquely? In other words, does
$$v_{tt}(q)|_{\Gamma_0\times [0,T]}=v_{tt}(p)|_{\Gamma_0\times [0,T]}$$
imply $q(x)=p(x)$ in $\Omega$?

\medskip
\textbf{Stability in the nonlinear inverse problem}

Let $\{z(q), v(q)\}$, $\{z(p), v(p)\}$ be weak solutions to system $\eqref{nonlinear}$ with corresponding coefficients $q(x)$ and $p(x)$.
Under geometric conditions on $\Gamma_1$, is it possible to estimate
$\dis\|q-p\|_{L^2(\Omega)}$ by some suitable norms of $\dis (v_{tt}(q)-v_{tt}(p))|_{\Gamma_0\times[0,T]}$?

In order to study the \textit{nonlinear inverse problem}, we first linearize $\eqref{nonlinear}$
and hence we consider the following system:
\begin{equation}\label{linear}
\begin{cases}
w_{tt}(x,t) - \Delta w(x,t) - q(x)w(x,t) = f(x)R(x,t) & \mbox{in } \Omega \times [0,T] \\
\frac{\pa w}{\pa\nu}(x,t) = 0 & \mbox{on } \Gamma_1\times[0,T] \\
w_t(x,t) = -u_{tt}(x,t)-\Delta^2 u(x,t)-\Delta^2 u_t(x,t) & \mbox{on } \Gamma_0\times[0,T] \\
u(x,t)=\frac{\pa u}{\pa \nu}(x,t)=0 & \mbox{on } \pa\Gamma_0\times[0,T] \\
\frac{\pa w}{\pa \nu}(x,t)=u_t(x,t) & \mbox{on } \Gamma_0\times[0,T] \\
w(\cdot,\frac{T}{2}) = 0 & \mbox{in } \Omega \\
w_t(\cdot,\frac{T}{2}) = 0 & \mbox{in } \Omega \\
u(\cdot,\frac{T}{2}) = 0 & \mbox{on } \Gamma_0 \\
u_t(\cdot,\frac{T}{2}) = 0 & \mbox{on } \Gamma_0
\end{cases}
\end{equation}
where $q\in L^{\infty}(\Omega)$ is given, $R(x,t)$ is fixed suitably
while $f(x)$ is an unknown time-independent coefficient. For this linearized system,
we have the advantage that the map $\{f\}\to \{w(f),u(f)\}$ is linear, hence
we consider the corresponding \textit{linear inverse problem}:

\medskip
\textbf{Uniqueness in the linear inverse problem}

Let $\{w=w(f), u=u(f)\}$ be the weak solution to system $\eqref{linear}$. Under geometrical
conditions on $\Gamma_1$, does $u_{tt}|_{\Gamma_0\times [0,T]}$
determine $f(x)$ uniquely? In other words, does
$$u_{tt}(f)|_{\Gamma_0\times [0,T]}=0$$ imply $f(x)=0$ in $\Omega$?

\medskip
\textbf{Stability in the linear inverse problem}

Let $\{w=w(f), u=u(f)\}$ be the weak solution to system $\eqref{linear}$. Under
geometrical conditions on $\Gamma_1$, is it possible
to estimate $\dis\|f\|_{L^2(\Omega)}$ by some suitable norms of $\dis u_{tt}|_{\Gamma_0\times[0,T]}$?
\begin{remark}
In our models $\eqref{nonlinear}$ and $\eqref{linear}$ we regard
$t=\frac{T}{2}$ as the initial time. This is not essential, because
the change of independent variables $t\to t-\frac{T}{2}$ transforms
$t=\frac{T}{2}$ to $t=0$. However, this is convenient for us to
apply the Carleman estimate established in \cite{l-t-z}. In fact, one can
keep $t=0$ as initial moment by doing an even extension of $w$ and
$u$ to $\Omega\times[-T, T]$, but then the Carleman estimate in \cite{l-t-z}
needs to be modified accordingly.
\end{remark}

\subsection{Literature and Motivation}

The PDE system $\eqref{nonlinear}$ is an example of a \emph{structural acoustic interaction}. It mathematically
describes the interaction of a vibrating beam/plate in an enclosed acoustic field or chamber. In this situation, the
boundary segment $\Gamma_1$ represents the ``hard'' walls of the chamber $\Omega$, with $\Gamma_0$ being the flexible
portion of the chamber wall. The flow with in the chamber is assumed to be of acoustic wave type, and hence the presence
of the wave equation in $\Omega$, satisfied by $z$ in $\eqref{nonlinear}$, coupled to a structural plate equation
(in variable $v$) on the flexible boundary portion $\Gamma_0$. This type of PDE models has long existed in the literature
and has been an object of intensive experimental and numerical studies at the Nasa Langley Research Center \cite{l-l,
b-f-s-s,b-s}. Moreover, recent innovations in smart material technology and the potential applications of these innovations
in control engineering design have greatly increased the interest in studying these structural acoustic models. As a result,
there has been a lot of recent contributions to the literature deal with various topis; e.g., optimal control, stability,
controllability, regularity \cite{a.1,a.2,a-l.1,a-l.2,a-l.3,a-l.4,a-l-r.1,a-l-r.2,c-t,l}. However, to the best
of our knowledge, there are no results available in the literature for our inverse type analysis on the model.

On the other hand, the interest to the inverse problem has been stimulated by the studies of applied problems such as geophysics,
medical imaging, scattering, nondestructive testing and so on. These problems are of the determination of unknown coefficients
of differential equations which are the functions depending on the point of the space \cite{b,is.1,is.2}. For the uniqueness in multidimensional inverse problem
with a single boundary observation, the pioneering paper by Bukhgeim and Klibanov \cite{b-k} provides a methodology based on a type of exponential weighted energy estimate,
which is usually referred as the Carleman estimate since the original work \cite{c} by Carleman.
After \cite{b-k}, several papers concerning inverse problems by using Carleman estimate have been published (e.g. \cite{is.3,k.1}).
In particular, for the inverse hyperbolic type problems that is related to our concern in this paper, there has been intensively studies \cite{i-y,is-y,kh,p-y,y}. However,
we mentioned again that there is not any such uniqueness and stability analysis for the structural acoustic models or even in general coupled PDE systems.
This motivates the work of the present paper.

The usual problem setting for inverse hyperbolic problem includes determining a coefficient from measurements on the whole boundary or part of the boundary, either Dirichlet type
\cite{b-k,kh,p-y,y} or Neumann type \cite{i-y,is-y}. Usually the coefficient describes a physical property of the medium (e.g. the elastic modulus in Hooke's law), and the inverse problem is
to determine such a property. In our formulation of the inverse problem, we need to determine the time-independent wave potential $q(x)$ by observing the acceleration from the
flexible portion of the boundary $\Gamma_0$. The mathematical challenge in this problem stems from the fact that we are dealing with the ``coupling'' on the part of the boundary
and the main technical difficulty associated with this structure is the lack of the compactness of the resolvent. As a result, the space regularity for the solution of the
wave equation component is limited by the structure on the plate and hence this will prevent us going to higher dimension ($n>7$)
no matter how smooth the initial data is. This is a distinguished feature of this structural acoustic model comparing to the
purely wave equation model as in that case the solution can be as smooth as we want as long as the initial data is smooth enough.
In this present paper, we prove the cases where the dimension $n=2$ and $3$ (physical meaningful cases) by using the Carleman estimate for the Neumann problem in \cite{l-t-z} and an operator theoretic formulation.
We show that indeed the observation of the acceleration on the plate can determine the potential $q$ under some
restrictions on the initial data and some geometrical conditions on the boundary. As we mentioned, the argument will also work
for dimension up to $n=7$.

\subsection{Main Assumptions and Preliminaries}

In this section we state the main geometrical assumptions throughout this paper. These assumptions are essential in order to establish the
Carleman estimate stated in section 2.

Let $\nu=[\nu_1,\cdots,\nu_n]$ be the unit outward normal vector on $\Gamma$, and let $\frac{\partial}{\partial \nu}=\nabla\cdot\nu$ denote the
corresponding normal derivative
Moreover, we assume the following geometric conditions
on $\Gamma_1=\Gamma\setminus\Gamma_0$:

\medskip
(A.1) There exists a strictly convex (real-valued) non-negative
function $\dis d:\overline{\Omega}\to\mathbb{R}^+$, of class
$C^3(\overline{\Omega})$, such that, if we introduce the
(conservative) vector field $h(x)=[h_1(x),\cdots,h_n(x)]\equiv\nabla
d(x), x\in\Omega$, then the following two properties hold true:

\medskip
(i)\be\label{assume1} \frac{\pa d}{\pa\nu}\bigg|_{\Gamma_1}=\nabla
d\cdot\nu=h\cdot\nu=0; \quad h\equiv\nabla d \ee
(ii) the (symmetric) Hessian matrix $\mathcal{H}_d$ of $d(x)$ [i.e.,
the Jacobian matrix $J_h$ of $h(x)$] is strictly positive definite
on $\overline{\Omega}$: there exists a constant $\rho>0$ such that
for all $x\in\overline{\Omega}$: \be\label{assume2}
\mathcal{H}_d(x)=J_h(x)=\left[\begin{array}{ccc}
d_{x_1x_1} & \cdots & d_{x_1x_n} \\
\vdots &  & \vdots \\
d_{x_nx_1} & \cdots & d_{x_nx_n} \\
\end{array}\right]
=\left[\begin{array}{ccc}
\frac{\pa h_1}{\pa x_1} & \cdots & \frac{\pa h_1}{\pa x_n} \\
\vdots &  & \vdots \\
\frac{\pa h_n}{x_1} & \cdots & \frac{\pa h_n}{\pa x_n} \\
\end{array}\right]
\geq\rho I \ee
(A.2) $d(x)$ has no critical point on $\overline{\Omega}$:
\be\label{assume3} \inf_{x\in\Omega}|h(x)|=\inf_{x\in\Omega}|\nabla
d(x)|=s>0 \ee
\begin{remark}
One canonical example is that $\Gamma_1$ is flat (not the case in our problem setting here), where then we can take
$d(x)=|x-x_0|^2$, with $x_0$ on the hyperplane containing $\Gamma_1$ and outside $\Omega$, then $h(x)=\nabla d(x)=
2(x-x_0)$ is radial. However, in general $h(x)$ is not necessary radial. In particularly in our case where $\Gamma_1$
is convex, the corresponding required $d(x)$ can also be explicitly constructed. For more examples of such function
$d(x)$ with different geometries of $\Gamma_1$, we refer to the appendix of \cite{l-t-z}.
\end{remark}
Next we introduce an abstract operator theoretic formulation associated to $\eqref{nonlinear}$ for which we will need the following facts and definitions:
Let the operator $A$ be
\be\label{operatorA}
Az=-\Delta z-q(x)z,\quad D(A)=\{z: \Delta z+q(x)z\in L^2(\Omega), \frac{\pa
z}{\pa \nu}\bigg|_{\Gamma}=0\}\ee
Notice the lower-order part is a perturbation which preserves generation of the self-adjoint principle part $A_N$ (e.g. \cite{l-t.4}),
where $A_N:L^2(\Omega)\supset D(A_N)\rarrow L^2(\Omega)$ is
defined by:
\be\label{operatorAN}
A_Nz=-\Delta z,\quad D(A_N)=\{z: \Delta z\in L^2(\Omega), \frac{\pa
z}{\pa \nu}\bigg|_{\Gamma}=0\}\ee
Then $A_N$ is positive self-adjoint and
\be\label{AN1/2}
D(A_N^{\frac{1}{2}})=H^1_{\Gamma_1}(\Omega)=\{z: z\in
H^1(\Omega), \frac{\pa z}{\pa\nu}=0 \ \text{on} \ \Gamma_1\}\ee
Then we define the Neumann map $N$ by:
\be z=Ng\;\Leftrightarrow\;\begin{cases}
\Delta z=0&\text{in}\;\;\;\Omega\\
\frac{\pa z}{\pa\nu} =0 &\text{on}\;\;\;\Gamma_1\\
\frac{\pa z}{\pa \nu}=g&\text{on}\;\;\;\Gamma_0
\end{cases}\ee
By elliptic theory
\begin{equation}
N\in\mathcal{L}(L^2(\Gamma_0),H^{3/2}_{\Gamma_1}(\Omega))
\end{equation}
Now we define
\begin{equation}
\mathcal{B}=A_NN:L^2(\Gamma_0)\rightarrow
D(A_N^{1\over2})'
\end{equation}
via the conjugation $\mathcal{B}^{*}=N^{*}A_N$. Then with $v\in L^2(\Gamma)$ and for any $y\in D(A_N^{1\over2})$ we have
\begin{multline}
-(\mathcal{B}^{*}y,v)_{\Gamma}=-(N^*A_Ny,v)_{\Gamma}=-(A_Ny,Nv)_{\Omega}=(\Delta y,Nv)_{\Omega}\\
=(y,\Delta(Nv))_{\Omega}+(\frac{\pa y}{\pa\nu},Nv)_{\Gamma}-(y,\frac{\pa(Nv)}{\pa\nu})_{\Gamma}=-(y,v)_{\Gamma_0}
\end{multline}
by Green's theorem, the definition of $N$ and the fact $\dis\frac{\pa y}{\pa\nu}=0$ on $\Gamma_1$ when $y\in D(A_N^{1\over2})$. In other words, we have
\be
N^{*}A_Ny=\begin{cases}
y,&\text{on}\;\;\;\Gamma_0\\
0,&\text{on}\;\;\;\Gamma_1
\end{cases}\;\;\;\;\;\text{for}\;y\in D(A_N^{1\over2})\ee
i.e. $\mathcal{B}^{*}=N^{*}A_N$ is the restriction of the trace
map from
$H^1(\Omega)$ to $H^{\frac{1}{2}}(\Gamma_0)$.

Last we set $\textbf{\AA}:L^2(\Gamma_0)\supset D(\textbf{\AA})\rightarrow
L^2(\Gamma_0)$ to be
\be\label{laplace2}
\textbf{\AA}=\Delta^2,D(\textbf{\AA})=\{v\in H^2_0(\Gamma_0):\Delta^2v\in L^2(\Gamma_0)\} \ee
where $H^2_0(\Gamma_0)=\{v\in H^2(\Omega):v=\frac{\pa v}{\pa\nu}=0 \ \text{on} \ \pa\Gamma_0\}$.
$\textbf{\AA}$ is self-adjoint, positive definite, and we have the characterization
\be\label{domainA} D(\textbf{\AA}^{\frac{1}{2}})=H^2_0(\Gamma_0) \ee Now set
\begin{equation}\mathcal{A}=\left[\begin{array}{cccc}
                      0 & I & 0 & 0 \\
                      -A_N+q & 0 & 0 &
\mathcal{B} \\
                      0 & 0 & 0 & I \\
    0 & -\mathcal{B}^* & -\textbf{\AA} & -\textbf{\AA} \end{array}
\right]\label{A}\end{equation}
on the energy space
\begin{equation}
\begin{split}
H & = D(A_N^{\frac{1}{2}})\times L^2(\Omega) \times D(\textbf{\AA}^{\frac{1}{2}}) \times L^2(\Gamma_0) \\
  & = H^1_{\Gamma_1}(\Omega)\times L^2(\Omega) \times H^2_0(\Gamma_0) \times L^2(\Gamma_0)
\end{split}
\end{equation}
Then we have the domain of the operator $\mathcal{A}$
\begin{equation}\label{DomA}
\begin{split}
D(\mathcal{A})& =\{[z_0,z_1,v_0,v_1]^T\in [D(A_N^{\frac{1}{2}})]^2 \times [D(\textbf{\AA}^{\frac{1}{2}})]^2 \ \text{such that} \\
& \qquad -z_0+Nv_1\in D(A_N) \ \text{and} \ v_0+v_1\in D(\textbf{\AA})\} \\
              & =\{[z_0,z_1,v_0,v_1]^T: z_0\in H^1_{\Gamma_1}(\Omega), z_1\in H^1_{\Gamma_1}(\Omega), v_0\in H^2_0(\Gamma_0), v_1\in H^2_0(\Gamma_0), \\
& \qquad (\Delta+q)z_0\in L^2(\Omega), \frac{\pa z_0}{\pa\nu}=v_1 \ \text{on} \ \Gamma_0 \ \text{and} \ v_0+v_1\in D(\textbf{\AA})\} \\
              & =\{[z_0,z_1,v_0,v_1]^T: z_0\in H^2_{\Gamma_1}(\Omega), z_1\in H^1_{\Gamma_1}(\Omega), v_0\in H^2_0(\Gamma_0), v_1\in H^2_0(\Gamma_0), \\
& \qquad \frac{\pa z_0}{\pa\nu}=v_1 \ \text{on} \ \Gamma_0 \ \text{and} \ v_0+v_1\in D(\textbf{\AA})\}
\end{split}
\end{equation}
where in the last step we get $z_0\in H^2(\Omega)$ from $q\in L^{\infty}(\Omega)$ and $(\Delta+q)z_0\in L^2(\Omega)$ due to elliptic theory.
Therefore with these notations, the original system $\eqref{nonlinear}$ becomes to the first order abstract differential equation
\be
\frac{dy}{dt}=\mathcal{A}y\ee
where $y=[z,z_t,v,v_t]^T$.
From semigroup theory, when the initial conditions $[z_0,z_1,v_0,v_1]$ are in $D(\mathcal{A})$ we have that the solution $y$ satisfies
\be
y\in D(\mathcal{A}), \quad y_t\in H
\ee
\begin{remark}
The structure of $\mathcal{A}$ reflects the coupled nature of this structural acoustic system $\eqref{nonlinear}$.
One distinguished feature of the system is that the resolvent of $\mathcal{A}$ is not compact. However, it can still
be shown that $\mathcal{A}$ generates a $C_0$-semigroup of contractions $\{e^{\mathcal{A}t}\}_{t\geq0}$ which
establishes the well-posedness of the system \cite{a-l.2}.
\end{remark}

\subsection{Main results}

For the inverse problems stated in section 1.1, we have the following results:
\begin{theorem}\label{th1}(Uniqueness for the linear inverse problem)
Under the main assumptions (A.1), (A.2)
and let \be\label{time} T>2\sqrt{\max_{x\in\overline{\Omega}}d(x)}
\ee Moreover, let \be\label{regR} R\in W^{3,\infty}(Q) \ee and
\be\label{crucialR}
\bigg|R\left(x,\frac{T}{2}\right)\bigg|\geq r_0>0,\qquad \bigg|R_t\left(x,\frac{T}{2}\right)\bigg|\geq r_1>0 \ee
for some positive constants
$r_0$, $r_1$ and $x\in\overline{\Omega}$. In addition, let
\be\label{regq} q\in
L^{\infty}(\Omega) \ee If the weak solution $\{w=w(f), u=u(f)\}$ to system
$\eqref{linear}$ satisfies
\be\label{h2reg}
w,w_t,w_{tt}\in H^{2}(Q)=H^2(0,T'L^2(\Omega))\cap L^2(0,T;H^2(\Omega))
\ee and
\be\label{linearuniqueness} u_{tt}(f)(x,t)=0,
\quad x\in\Gamma_0, t\in[0,T] \ee then $f(x)=0$,  $x\in\Omega$.
\end{theorem}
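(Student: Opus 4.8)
The plan is to reduce the inverse uniqueness problem to a Carleman-estimate argument applied to a time-differentiated version of the system. The key observation is that although the map $\{f\}\to\{w(f),u(f)\}$ is linear, the standard Bukhgeim--Klibanov methodology requires us to differentiate the hyperbolic equation in time so that the unknown $f(x)$ appears multiplied by $R_t(x,T/2)$ rather than by the (possibly vanishing) time-derivatives of $w$. Setting $y=w_t$, I would differentiate the wave equation in $\eqref{linear}$ with respect to $t$ to obtain $y_{tt}-\Delta y-qy=f(x)R_t(x,t)$, with the coupled plate acting on $\Gamma_0$ now through $u_{tt}$ and its time derivatives. The hypothesis $\eqref{linearuniqueness}$, namely $u_{tt}=0$ on $\Gamma_0\times[0,T]$, together with the coupling conditions, should be used to force the normal derivative $\frac{\pa y}{\pa\nu}$ (and the trace of $y$) to vanish on $\Gamma_0$; combined with the homogeneous Neumann condition on $\Gamma_1$, this will give us homogeneous boundary data for the over-determined hyperbolic problem satisfied by $y$.

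Next I would exploit the fact that the initial data at $t=T/2$ is homogeneous. Evaluating the time-differentiated equation at $t=T/2$ and using $w(\cdot,T/2)=w_t(\cdot,T/2)=0$ shows that $y_{tt}(\cdot,T/2)$ is essentially $f(x)R_t(x,T/2)$, so the lower bound $\eqref{crucialR}$, $|R_t(x,T/2)|\ge r_1>0$, converts an energy estimate for $y$ at the initial time directly into an $L^2$ bound for $f$. This is exactly the mechanism that makes a single measurement suffice. The regularity assumptions $\eqref{regR}$ on $R$ and $\eqref{h2reg}$ on $w$ are what permit the time-differentiation to be carried out rigorously and keep all traces in the correct function spaces.

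The technical heart of the argument is the Carleman estimate for the Neumann problem from \cite{l-t-z}, which applies precisely under the geometric assumptions (A.1), (A.2) and the time condition $\eqref{time}$, $T>2\sqrt{\max_{\overline\Omega}d}$. Introducing the pseudo-convex weight function $\varphi(x,t)=d(x)-c(t-T/2)^2$ built from the function $d$ of (A.1), I would apply the Carleman estimate to $y$ on $\Omega\times[0,T]$. Because the boundary traces of $y$ vanish on all of $\Gamma$ (homogeneous Neumann on $\Gamma_1$ from the model, and the consequence of $u_{tt}=0$ on $\Gamma_0$), the boundary terms in the Carleman inequality drop out, leaving an interior estimate that dominates a weighted $L^2$ norm of $f(x)R_t$ by terms that vanish or can be absorbed. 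The condition $\eqref{time}$ guarantees that the level sets of $\varphi$ cover $\overline\Omega$ at $t=T/2$, so that the estimate is not vacuous.

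The main obstacle I anticipate is the careful treatment of the boundary coupling on $\Gamma_0$: the plate equation $w_t=-u_{tt}-\Delta^2 u-\Delta^2 u_t$ and the kinematic condition $\frac{\pa w}{\pa\nu}=u_t$ intertwine the wave trace with the plate dynamics, so deducing that $\frac{\pa y}{\pa\nu}=\frac{\pa w_t}{\pa\nu}$ and the Dirichlet trace of $y$ both vanish from the single assumption $u_{tt}=0$ is delicate. One must use $u_{tt}=0$ to conclude $u_t$ is constant in $t$ on $\Gamma_0$, then invoke the homogeneous initial condition $u_t(\cdot,T/2)=0$ to get $u_t\equiv0$, hence $\frac{\pa w}{\pa\nu}=0$ on $\Gamma_0$, and similarly track $\Delta^2 u$ through the plate equation to pin down the trace of $w_t$. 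Only after this boundary analysis is complete can the Carleman machinery be applied with genuinely homogeneous boundary data, and the $L^2$ conclusion $f\equiv0$ extracted from $\eqref{crucialR}$.
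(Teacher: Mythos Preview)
Your overall strategy coincides with the paper's: set $y=w_t$, use $u_{tt}=0$ together with the homogeneous initial data $u(\cdot,T/2)=u_t(\cdot,T/2)=0$ and the coupling relations to force $u\equiv 0$ on $\Gamma_0\times[0,T]$, hence both $y|_{\Gamma_0}=-u_{tt}-\Delta^2 u-\Delta^2 u_t=0$ and $\frac{\pa y}{\pa\nu}|_{\Gamma_0}=u_{tt}=0$; then apply the Carleman estimate of \cite{l-t-z} with vanishing boundary terms and exploit $|R_t(\cdot,T/2)|\geq r_1$ via $y_{tt}(x,T/2)=f(x)R_t(x,T/2)$.

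There is, however, a genuine gap in how you propose to close the Carleman argument. With $BT|_y=0$ the Carleman inequality reads, schematically,
\[
C_{1,\tau}\int_Q e^{2\tau\varphi}\bigl[y_t^2+|\nabla y|^2\bigr]\,dQ + C_{2,\tau}\int_{Q(\sigma)}e^{2\tau\varphi}y^2\,dx\,dt \;\leq\; C\int_Q e^{2\tau\varphi}|f|^2\,dQ + Ce^{2\tau\sigma},
\]
so the forcing $|fR_t|^2$ sits on the \emph{upper-bound} side, not the lower; your phrase ``an interior estimate that dominates a weighted $L^2$ norm of $f(x)R_t$'' has the inequality backwards. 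To close, the paper feeds $|f(x)|\leq C|y_{tt}(x,T/2)|$ through the fundamental theorem of calculus and the weight $\varphi$ to obtain
\[
\int_Q e^{2\tau\varphi}|f|^2\,dQ \;\leq\; C(\tau+1)\int_Q e^{2\tau\varphi}|y_{tt}|^2\,dQ + C\int_Q e^{2\tau\varphi}|y_{ttt}|^2\,dQ .
\]
These $y_{tt}$, $y_{ttt}$ terms cannot be absorbed by the single Carleman estimate for $y$: one must \emph{also} apply the Carleman estimate to $y_t$ and to $y_{tt}$ (which satisfy the analogous equations with forcings $fR_{tt}$ and $fR_{ttt}$, and share the same vanishing boundary data), add the three inequalities, and only then absorb for $\tau$ large to conclude $y\equiv 0$ on $Q(\sigma)$ and hence $f\equiv 0$. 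This iterated application is exactly what forces the hypotheses $R\in W^{3,\infty}(Q)$ and $w,w_t,w_{tt}\in H^2(Q)$, and it is the step your outline is missing.
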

\begin{theorem}\label{th2}(Uniqueness for the nonlinear inverse problem)
Under the main assumptions (A.1), (A.2), assume
$\eqref{time}$ and \be\label{regqp} q,p\in L^{\infty}(\Omega) \ee
Let either of $z(q)$ and $z(p)$ satisfy \be\label{regw} z\in
W^{3,\infty}(Q) \ee Moreover, let \be\label{crucialz}|z_0(x)|\geq
s_0>0,\qquad |z_1(x)|\geq s_1>0 \ee for some positive constants
$s_0$, $s_1$ and $x\in\overline{\Omega}$. If the weak solutions $\{z(q), v(q)\}$ and $\{z(p), v(p)\}$ to system $\eqref{nonlinear}$ satisfy
\be\label{differenceh2}
z(q)-z(p), z_t(q)-z_t(p), z_{tt}(q)-z_{tt}(p)\in H^{2}(Q)
\ee
and
\be\label{nonlinearuniqueness} v_{tt}(q)(x,t)=v_{tt}(p)(x,t), \quad
x\in\Gamma_0, t\in[0,T] \ee then $q(x)=p(x)$, $x\in\Omega$.
\end{theorem}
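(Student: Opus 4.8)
The plan is to reduce the nonlinear uniqueness result (Theorem~\ref{th2}) to the linear uniqueness result (Theorem~\ref{th1}) by taking differences of the two solutions. Set $w=z(q)-z(p)$ and $u=v(q)-v(p)$, and write $f(x)=q(x)-p(x)$, the quantity we ultimately wish to show vanishes. Subtracting the two copies of system~\eqref{nonlinear} and rearranging the potential terms, I would verify that $\{w,u\}$ satisfies a system of exactly the form~\eqref{linear}. Specifically, the wave equation becomes
\begin{equation*}
w_{tt}-\Delta w - p(x)w = \big(q(x)-p(x)\big)z(q) = f(x)z(q)(x,t),
\end{equation*}
where I have added and subtracted $p\,z(q)$ so that the coefficient attached to $w$ is $p$ (playing the role of the given $q$ in~\eqref{linear}) and the forcing is $f$ times the known profile $z(q)$ (playing the role of the source $f(x)R(x,t)$). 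The boundary coupling conditions on $\Gamma_0$ and $\Gamma_1$ are linear and homogeneous, so they survive subtraction unchanged, and the initial data at $t=T/2$ all vanish because both solutions share the same initial conditions $[z_0,z_1,v_0,v_1]$. Thus $\{w,u\}$ is precisely the weak solution of the linear system~\eqref{linear} with given potential $p$, source coefficient $f=q-p$, and profile $R=z(q)$.

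Next I would check that the hypotheses of Theorem~\ref{th1} are met by this reduced problem. The regularity $R=z(q)\in W^{3,\infty}(Q)$ is exactly the assumption~\eqref{regw}, and $p\in L^\infty(\Omega)$ is~\eqref{regqp}. The crucial positivity conditions~\eqref{crucialR} on $R(\cdot,T/2)$ and $R_t(\cdot,T/2)$ translate, via $R=z(q)$ and the initial data $z(q)(\cdot,T/2)=z_0$, $z_t(q)(\cdot,T/2)=z_1$, into precisely the hypotheses~\eqref{crucialz}, namely $|z_0|\geq s_0>0$ and $|z_1|\geq s_1>0$. The interior Sobolev regularity~\eqref{h2reg} required of the linear solution is guaranteed by the difference regularity assumption~\eqref{differenceh2}, and the observation hypothesis~\eqref{nonlinearuniqueness}, $v_{tt}(q)=v_{tt}(p)$ on $\Gamma_0\times[0,T]$, says exactly that $u_{tt}=v_{tt}(q)-v_{tt}(p)=0$ on $\Gamma_0\times[0,T]$, which is~\eqref{linearuniqueness}. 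Having matched every hypothesis, Theorem~\ref{th1} then yields $f=q-p\equiv 0$, i.e. $q(x)=p(x)$ on $\Omega$, completing the proof.

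I expect the main subtlety to lie not in the algebra of subtraction but in confirming that the difference $\{w,u\}$ is genuinely a \emph{weak solution} of~\eqref{linear} in the sense required by Theorem~\ref{th1}, and that the regularity bookkeeping is self-consistent. In particular one must be careful that choosing $p$ (rather than $q$) as the ``given'' potential in the linearized system is harmless, since the roles of $q$ and $p$ in Theorem~\ref{th2} are symmetric and the hypotheses~\eqref{regw}, \eqref{crucialz} are stated for ``either'' of $z(q)$, $z(p)$; one simply selects whichever solution enjoys the $W^{3,\infty}$ regularity as the profile $R$ and assigns the other potential as the given coefficient. The only genuine analytic content is already packaged inside Theorem~\ref{th1} via the Carleman estimate; the present proof is the standard linearization step that makes that content applicable, so the real work is verifying the hypothesis translation carefully rather than proving anything new.
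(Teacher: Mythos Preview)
Your proposal is correct and follows essentially the same approach as the paper: subtract the two copies of \eqref{nonlinear} to obtain the linear system \eqref{linear}, verify that the hypotheses of Theorem~\ref{th1} are satisfied, and conclude $f=q-p\equiv 0$. The only cosmetic difference is that the paper takes $R=z(p)$ with given potential $q$, whereas you take $R=z(q)$ with given potential $p$; as you correctly observe, the symmetry of the hypotheses (in particular \eqref{regw}) makes either choice valid.
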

\begin{theorem}\label{th3}(Stability for the linear inverse problem)
Under the main assumptions (A.1), (A.2), assume $\eqref{time}$, $\eqref{regR}$, $\eqref{crucialR}$
and $\eqref{regq}$. Moreover, let
\be\label{regRt}
R_{t}\in H^{\frac{1}{2}+\epsilon}(0,T;L^{\infty}(\Omega))\ee
for some $0<\epsilon<\frac{1}{2}$.
Then there exists a constant $C=C(\Omega,T,\Gamma_0,\varphi,q,R)>0$
such that
\begin{equation}\label{stability}
\|f\|_{L^2(\Omega)}\leq C\left(\|u_{tt}\|_{L^2(\Gamma_0\times[0,T])}+\|u_{ttt}\|_{L^2(\Gamma_0\times[0,T])}+\|\Delta^2u_{tt}\|_{L^2(\Gamma_0\times[0,T])}\right)
\end{equation}
for all $f\in L^2(\Omega)$.
\end{theorem}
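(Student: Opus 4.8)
The plan is to deduce the stability estimate \eqref{stability} from the uniqueness result in Theorem \ref{th1} by combining a continuous observability inequality with a compactness/uniqueness (Bukhgeim–Klibanov type) argument, rather than by trying to extract quantitative constants directly from the Carleman estimate. First I would differentiate system \eqref{linear} once in time and introduce the new variables $\tilde{w}=w_t$, $\tilde{u}=u_t$, which solve a system of the same structure but now with source term $f(x)R_t(x,t)$ and with the lower-order assumption $R_t \in H^{\frac{1}{2}+\epsilon}(0,T;L^\infty(\Omega))$ ensuring enough regularity. The key observation is that by the change of variable $t \to t - \frac{T}{2}$ the initial data at $t=\frac{T}{2}$ is zero, so the Carleman machinery of \cite{l-t-z} applies and, as in the proof of Theorem \ref{th1}, yields an estimate of the interior energy of the solution at the initial time $t=\frac{T}{2}$ in terms of the boundary observation of $u$-derivatives on $\Gamma_0$.

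The central analytic step is to establish a continuous observability inequality of the form
\begin{equation}
\|f\|_{L^2(\Omega)} \leq C\left(\|u_{tt}\|_{L^2(\Gamma_0\times[0,T])}+\|u_{ttt}\|_{L^2(\Gamma_0\times[0,T])}+\|\Delta^2 u_{tt}\|_{L^2(\Gamma_0\times[0,T])}\right) + C\|w\|_{L^2(Q')},
\end{equation}
where $Q'$ is a subdomain and the last term is a lower-order compact perturbation. This inequality follows from the Carleman estimate by the standard procedure: multiply through by the exponential weight, integrate over the cylinder $\Omega\times[0,T]$, absorb the boundary terms on $\Gamma_1$ using the geometric assumptions (A.1)–(A.2) and the Neumann condition $\frac{\pa w}{\pa\nu}=0$ on $\Gamma_1$, and then evaluate the energy at the initial slice $t=\frac{T}{2}$. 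The crucial nondegeneracy hypothesis \eqref{crucialR}, namely $|R(x,\frac{T}{2})|\geq r_0$ and $|R_t(x,\frac{T}{2})|\geq r_1$, is precisely what converts the bound on the time-derivative of the solution at $t=\frac{T}{2}$ into a bound on $f(x)$ itself, since at that instant $w_{tt}(x,\frac{T}{2}) = f(x)R(x,\frac{T}{2})$ up to terms controlled by the data. The recovery of the coupling boundary data $u_{tt}, u_{ttt}, \Delta^2 u_{tt}$ enters through the interface condition $w_t = -u_{tt}-\Delta^2 u-\Delta^2 u_t$ on $\Gamma_0$, which expresses the relevant Neumann/Dirichlet traces of $w$ on $\Gamma_0$ in terms of the measured plate quantities.

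The final step is to remove the lower-order term $\|w\|_{L^2(Q')}$ by a compactness/uniqueness argument. I would argue by contradiction: if \eqref{stability} fails, there exists a sequence $f_n$ with $\|f_n\|_{L^2(\Omega)}=1$ whose corresponding observations tend to zero in the right-hand-side norm. By the observability inequality the sequence $f_n$ is bounded in $L^2(\Omega)$ and, after passing to a subsequence, $f_n \rightharpoonup f$ weakly, with the compact embedding giving strong convergence of the lower-order term $\|w_n\|_{L^2(Q')}$. Passing to the limit, the observation of the limiting solution vanishes, so Theorem \ref{th1} forces $f=0$; but the strong convergence then forces $\|f_n\|_{L^2(\Omega)}\to 0$, contradicting $\|f_n\|=1$. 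The main obstacle I anticipate is exactly the compactness step, because the remark in the excerpt stresses that the resolvent of $\mathcal{A}$ is \emph{not} compact; consequently the lower-order term must be shown compact by a direct interior-regularity/Aubin–Lions argument on the wave component $w$ rather than by abstract resolvent compactness, and one must verify that the extra regularity \eqref{regRt} on $R_t$ is sufficient to make the differentiated system well-posed in the energy space so that the limit solution inherits the hypotheses of the uniqueness theorem.
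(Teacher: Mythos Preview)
Your overall architecture---observability estimate polluted by a compact lower-order term, then compactness/uniqueness invoking Theorem~\ref{th1}---matches the paper's, and you correctly anticipate both where the condition $|R(\cdot,\tfrac{T}{2})|\ge r_0$ enters and why the extra regularity \eqref{regRt} is needed for the Aubin--Lions step. However, the paper executes the central analytic step quite differently, and your version of it is underspecified.

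The paper does \emph{not} extract the polluted inequality directly from the Carleman estimate applied to the inhomogeneous equation for $\tilde w=w_t$. Instead it introduces a splitting $\bar w=\psi+y$, where $\psi$ solves the \emph{homogeneous} wave equation with Neumann datum $u_{tt}$ on $\Gamma_0$ and initial velocity $f(x)R(x,\tfrac{T}{2})$, while $y$ solves the inhomogeneous equation with source $fR_t$, zero Neumann data on all of $\Gamma$, and zero initial conditions. The already-established observability inequality (Theorem~\ref{theoremobserve}) is applied to $\psi$, giving $\|f\|_{L^2(\Omega)}\le C(\|\psi\|+\|\psi_t\|+\|u_{tt}\|)$ on $\Gamma_0\times[0,T]$ directly. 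The coupling relation then rewrites $\|\bar w\|,\|\bar w_t\|$ on $\Gamma_0$ in terms of $u_{tt},u_{ttt},\Delta^2 u_{tt}$, and the residual terms are $\|y\|_{L^2(\Gamma_0\times[0,T])}$ and $\|y_t\|_{L^2(\Gamma_0\times[0,T])}$---boundary traces, not an interior norm $\|w\|_{L^2(Q')}$. The compactness of $f\mapsto y|_{\Gamma_0},\,y_t|_{\Gamma_0}$ is then proved (Lemma~5.1) using the sharp regularity $y_t\in C([0,T];H^{1/2+\epsilon}(\Omega))$ granted by \eqref{regRt} together with Aubin--Lions on the trace.

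Your proposed route---applying Carleman directly to the inhomogeneous $\tilde w$-equation and ``evaluating at $t=\tfrac{T}{2}$''---is not obviously wrong, but it is not the ``standard procedure'' you describe. The Carleman boundary terms $BT|_{\bar w}$ in \eqref{boundary} contain full tangential gradients $|\nabla\bar w|^2$ and $(h\cdot\nabla\bar w)\tfrac{\partial\bar w}{\partial\nu}$ on $\Gamma_0$, which are not controlled by the measured quantities $u_{tt},u_{ttt},\Delta^2 u_{tt}$ alone; handling them requires the tangential trace estimate (Lemma~\ref{tangential}) and produces an interior $H^{1/2+\epsilon_0}$ lower-order term, not merely $L^2(Q')$. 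Moreover, the source term $\int e^{2\tau\varphi}|fR_t|^2$ sits on the wrong side and must be absorbed against $\int e^{2\tau\varphi(x,T/2)}|f|^2$ via a careful $\tau$-balance, as in the uniqueness proof; turning this into a clean quantitative bound with the specific right-hand side of \eqref{stability} is not immediate. The paper's $\psi/y$ decomposition sidesteps all of this by offloading the Carleman work onto the clean observability inequality for the homogeneous problem, and isolating the source contribution in $y$ where compactness is transparent.
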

\begin{theorem}\label{th4}(Stability for the nonlinear inverse problem)
Under the main assumptions (A.1), (A.2), assume
$\eqref{time}$, $\eqref{regqp}$, $\eqref{regw}$ and $\eqref{crucialz}$. Moreover,
let the initial data satisfy the compatibility condition
\begin{enumerate}
\item
When $n=2$, $[z_0,z_1,v_0,v_1]\in D(\mathcal{A}^2)$ where
\begin{equation*}
\begin{split}
D(\mathcal{A}^2) & =\{[z_0,z_1,v_0,v_1]^T: z_0\in H^3_{\Gamma_1}(\Omega),z_1\in H^2_{\Gamma_1}(\Omega),v_0\in H^2_0(\Gamma_0), v_1\in H^2_0(\Gamma_0), \\
& \qquad\textbf{\AA}(v_0+v_1)+B^{*}z_1\in H^2_0(\Gamma_0), v_1-\textbf{\AA}(v_0+v_1)-B^{*}z_1\in D(\textbf{\AA}), \\
& \qquad\frac{\pa z_0}{\pa\nu}|_{\Gamma_0}=v_1, \frac{\pa z_1}{\pa\nu}|_{\Gamma_0}=-\textbf{\AA}(v_0+v_1)-B^{*}z_1 \}
\end{split}
\end{equation*}
\item
When $n=3$, $[z_0,z_1,v_0,v_1]\in D(\mathcal{A}^3)$ where
\begin{equation*}
\begin{split}
D(\mathcal{A}^3) & =\{[z_0,z_1,v_0,v_1]^T: z_0\in H^{\frac{7}{2}}_{\Gamma_1}(\Omega), z_1\in H^3_{\Gamma_1}(\Omega), v_0\in H^2_0(\Gamma_0), v_1\in H^2_0(\Gamma_0), \\
& \textbf{\AA}(v_0+v_1)+B^{*}z_1\in H^2_0(\Gamma_0), \frac{\pa z_0}{\pa\nu}|_{\Gamma_0}=v_1, \frac{\pa z_1}{\pa\nu}|_{\Gamma_0}=-\textbf{\AA}(v_0+v_1)-B^{*}z_1 \\
& \textbf{\AA}(v_0+v_1)+B^{*}z_1+\textbf{\AA}[v_1-\textbf{\AA}(v_0+v_1)-B^{*}z_1]+B^{*}[(-A_N+q)z_0+Bv_1]\in D(\textbf{\AA}) \\
& \textbf{\AA}(v_1-\textbf{\AA}(v_0+v_1)-B^{*}z_1)+B^{*}[(-A_N+q)z_0+Bv_1]\in H^2_0(\Gamma_0), \\
& \frac{\pa [(-A_N+q)z_0+Bv_1]}{\pa\nu}|_{\Gamma_0}=-\textbf{\AA}[v_1-\textbf{\AA}(v_0+v_1)-B^{*}z_1]-B^{*}[(-A_N+q)z_0+Bv_1]\}
\end{split}
\end{equation*}
\noindent Then there exists a constant
$C=C(\Omega,T,\Gamma_0,\varphi,q,p,z_0,z_1,v_0,v_1)>0$ such that
\begin{multline}\label{nonlinearstability} \|q-p\|_{L^2(\Omega)}\leq
C\left(\|v_{tt}(q)-v_{tt}(p)\|_{L^2(\Gamma_0\times[0,T])}+\|v_{ttt}(q)-v_{ttt}(p)\|_{L^2(\Gamma_0\times[0,T])}\right. \\
\left. \qquad +\|\Delta^2(v_{tt}(q)-v_{tt}(p))\|_{L^2(\Gamma_0\times[0,T])}\right)
\end{multline} for all $q,p\in W^{1,\infty}(\Omega)$ when $n=2$ and all $q,p\in W^{2,\infty}(\Omega)$ when $n=3$.
\end{enumerate}
\end{theorem}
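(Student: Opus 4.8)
The plan is to reduce the nonlinear stability estimate \eqref{nonlinearstability} to the linear one of Theorem~\ref{th3} by differencing the two solutions, and then to supply, through the operator-theoretic formulation introduced above, the regularity that legitimizes this reduction. First I would set $w=z(q)-z(p)$, $u=v(q)-v(p)$ and $f=q-p$ and subtract the two copies of \eqref{nonlinear}. Using $qz(q)-pz(p)=q\,(z(q)-z(p))+(q-p)z(p)=qw+f\,z(p)$, the interior equation collapses to
\[
w_{tt}-\Delta w-q\,w=f(x)\,z(p)(x,t)\qquad\text{in }\Omega\times[0,T],
\]
while every boundary relation on $\Gamma_1$, $\Gamma_0$ and $\pa\Gamma_0$ is linear in $(z,v)$ and so is reproduced verbatim by $(w,u)$; since $z(q),z(p)$ (resp. $v(q),v(p)$) share the initial data at $t=\tfrac{T}{2}$, the differences start from rest, $w(\cdot,\tfrac{T}{2})=w_t(\cdot,\tfrac{T}{2})=0$ and $u(\cdot,\tfrac{T}{2})=u_t(\cdot,\tfrac{T}{2})=0$. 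Hence $\{w,u\}$ is exactly the weak solution of the linearized system \eqref{linear} with
\[
R(x,t)=z(p)(x,t),\qquad f(x)=q(x)-p(x),
\]
where, relabelling $q$ and $p$ if necessary, \eqref{regw} lets me take $z(p)\in W^{3,\infty}(Q)$.

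Next I would verify that this choice of $R$ meets every hypothesis of Theorem~\ref{th3}. Condition \eqref{regR} is precisely $z(p)\in W^{3,\infty}(Q)$, and \eqref{regRt} follows from it, since $R_t=z_t(p)\in W^{2,\infty}(Q)\hookrightarrow W^{1,\infty}(0,T;L^\infty(\Omega))\hookrightarrow H^{1/2+\epsilon}(0,T;L^\infty(\Omega))$ for $\epsilon<\tfrac12$. The nondegeneracy \eqref{crucialR} is inherited from \eqref{crucialz}: as $z(p)$ solves \eqref{nonlinear} with the prescribed data, $R(\cdot,\tfrac{T}{2})=z_0$ and $R_t(\cdot,\tfrac{T}{2})=z_1$, so $|R(\cdot,\tfrac{T}{2})|\ge s_0>0$ and $|R_t(\cdot,\tfrac{T}{2})|\ge s_1>0$. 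Finally \eqref{regq} is \eqref{regqp}, and $f=q-p\in L^2(\Omega)$. Theorem~\ref{th3} then yields
\[
\|q-p\|_{L^2(\Omega)}\le C\Big(\|u_{tt}\|_{L^2(\Gamma_0\times[0,T])}+\|u_{ttt}\|_{L^2(\Gamma_0\times[0,T])}+\|\Delta^2u_{tt}\|_{L^2(\Gamma_0\times[0,T])}\Big),
\]
with $C=C(\Omega,T,\Gamma_0,\varphi,q,R)$, and since $R=z(p)$ depends only on $p$ and the data, $C=C(\Omega,T,\Gamma_0,\varphi,q,p,z_0,z_1,v_0,v_1)$; recalling $u=v(q)-v(p)$, this is exactly \eqref{nonlinearstability}.

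The point that remains — and the one I expect to be the main obstacle — is to guarantee that this application is legitimate and non-vacuous: the difference $\{w,u\}$ must carry the regularity under which \eqref{stability} was derived (in the spirit of \eqref{h2reg}), and the observed quantities $u_{tt},u_{ttt},\Delta^2u_{tt}$ must genuinely lie in $L^2(\Gamma_0\times[0,T])$. This is where the compatibility hypotheses and the operator theory do their work. Writing each nonlinear solution as $y=e^{\mathcal{A}(t-T/2)}[z_0,z_1,v_0,v_1]^T$ with the coefficient-dependent generator $\mathcal{A}$ introduced above, membership of the data in $D(\mathcal{A}^2)$ (for $n=2$) or $D(\mathcal{A}^3)$ (for $n=3$) gives, by semigroup theory, $y\in C^{j}([0,T];D(\mathcal{A}^{k-j}))$ for $j\le k$. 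Reading off the plate components and invoking the compatibility relations built into the explicit descriptions of $D(\mathcal{A}^2)$ and $D(\mathcal{A}^3)$ (those forcing the relevant combinations into $D(\textbf{\AA})$ and $H^2_0(\Gamma_0)$) then promotes $v_{tt}(q),v_{ttt}(q),\Delta^2 v_{tt}(q)$, and likewise for $p$, into $L^2(\Gamma_0\times[0,T])$; subtracting the two transfers this to $u$.

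The delicate part is the domain bookkeeping. Because the resolvent of $\mathcal{A}$ is not compact, the space regularity of the acoustic component $z$ is slaved to that of the plate component through the interface conditions $\frac{\pa z}{\pa\nu}=v_t$ on $\Gamma_0$ and $v_0+v_1\in D(\textbf{\AA})$ already encoded in $D(\mathcal{A})$, so one cannot trade time-smoothness for space-smoothness freely as in the pure wave case. It is precisely this coupling that fixes the number of powers of $\mathcal{A}$ and the extra Sobolev regularity of the coefficients recorded in the statement — $q,p\in W^{1,\infty}(\Omega)$ for $n=2$ and $q,p\in W^{2,\infty}(\Omega)$ for $n=3$ — the thresholds being dictated by the dimension-dependent trace and Sobolev embeddings needed to land $\Delta^2v_{tt}$ in $L^2(\Gamma_0)$; the same embeddings are what ultimately confine the method to $n\le 7$. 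Once this regularity is secured, the displayed invocation of Theorem~\ref{th3} is a genuine estimate and \eqref{nonlinearstability} follows, completing the proof.
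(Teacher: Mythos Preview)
Your reduction to Theorem~\ref{th3} via $f=q-p$, $w=z(q)-z(p)$, $u=v(q)-v(p)$, $R=z(p)$ is exactly the paper's strategy, and your verification of \eqref{regR}, \eqref{crucialR}, \eqref{regq} matches. Where you diverge is in how you obtain \eqref{regRt} and in what role you assign to the hypothesis $[z_0,z_1,v_0,v_1]\in D(\mathcal{A}^k)$. You extract \eqref{regRt} directly from the a~priori assumption \eqref{regw} via $W^{3,\infty}(Q)\ni z(p)\Rightarrow z_t(p)\in W^{1,\infty}(0,T;L^\infty(\Omega))\hookrightarrow H^{1/2+\epsilon}(0,T;L^\infty(\Omega))$, which is perfectly valid; you then speculate that $D(\mathcal{A}^k)$ is there to secure \eqref{h2reg}-type regularity of the difference and $L^2(\Gamma_0\times[0,T])$ membership of $u_{tt},u_{ttt},\Delta^2u_{tt}$, and you leave that part admittedly loose.

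The paper takes the opposite route. Its entire proof is devoted to using the $D(\mathcal{A}^k)$ hypothesis to derive \eqref{regRt}: for $n=2$ it differentiates \eqref{nonlinear} once in $t$, invokes $[z_0,z_1,v_0,v_1]\in D(\mathcal{A}^2)$ and semigroup theory to get $z_{tt}\in C([0,T];H^1(\Omega))$, $z_{ttt}\in C([0,T];L^2(\Omega))$, then elliptic regularity yields $z_t\in L^2(0,T;H^2(\Omega))$, and interpolation with $z_t\in H^1(0,T;H^1(\Omega))$ gives $z_t\in H^{1/2+\epsilon}(0,T;H^{3/2-\epsilon}(\Omega))\subset H^{1/2+\epsilon}(0,T;L^\infty(\Omega))$ by Sobolev embedding; for $n=3$ it differentiates twice and uses $D(\mathcal{A}^3)$ analogously. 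That is the \emph{only} use the paper makes of $D(\mathcal{A}^k)$, and it makes no attempt to justify regularity of the boundary observations or of the difference $(w,u)$. So your shortcut exposes a redundancy in the hypotheses as stated (since \eqref{regw} already yields \eqref{regRt}), while the paper's interpolation argument is precisely the dimension-dependent mechanism that explains why $n=2$ needs $D(\mathcal{A}^2)$ and $n=3$ needs $D(\mathcal{A}^3)$; your last two paragraphs attribute that dimensional dependence to the wrong step.
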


\medskip
The rest of this paper is organized as follows: In section 2 we give the key Carleman estimate that is used in the proof of uniqueness result.
Based on the same Carleman estimate, we also prove an observability inequality that is needed in section 5.
Section 3 to 6 are devoted to the proofs of our main results Theorems \ref{th1} to \ref{th4}. Some concluding remarks will
be given in section 7.

\section{Carleman estimate and observability inequality}

\subsection{Carleman Estimate}

In this section, we state a Carleman estimate result that plays a key
role in the proof of our uniqueness theorem. The result is due to \cite{l-t-z}.

We first introduce the pseudo-convex function $\varphi(x,t)$ defined by \be\label{defphi}
\varphi(x,t)=d(x)-c\left(t-\frac{T}{2}\right)^2; \quad x\in\Omega,
t\in[0,T]\ee where $T$ is as in $\eqref{time}$ and $0<c<1$ is
selected as follows:
By $\eqref{time}$, there exists $\delta>0$ such that
\be\label{timesquare} T^2>4\max_{x\in\overline{\Omega}}d(x)+4\delta
\ee For this $\delta>0$, there exists a constant $c$, $0<c<1$, such
that \be\label{ctsquare}
cT^2>4\max_{x\in\overline{\Omega}}d(x)+4\delta\ee Henceforth, with
$T$ and $c$ chosen as described above, this function $\varphi(x,t)$
has the following properties:

(a) For the constant $\delta>0$ fixed in $\eqref{timesquare}$ and for any $t>0$
\be\label{propertya}
\varphi(x,t)\leq\varphi(x,\frac{T}{2}),\quad\varphi(x,0)=\varphi(x,T)\leq
d(x)-c\frac{T^2}{4}\leq-\delta
\ee
uniformly in $x\in\Omega$.

\medskip
(b) There are $t_0$ and $t_1$, with $0<t_0<\frac{T}{2}<t_1<T$, such
that we have \be\label{propertyb}
\min_{x\in\overline{\Omega},t\in[t_0,t_1]}\varphi(x,t)\geq\sigma
\ee
where $0<\sigma<\min_{x\in\overline{\Omega}}d(x)$.

Moreover, if we introduce the space $Q(\sigma)$ that is defined by the following
\be\label{qsigma}
Q{(\sigma)}=\{(x,t)|x\in\Omega, 0\leq t\leq T, \varphi(x,t)\geq\sigma>0\} \ee
Then an important property of $Q(\sigma)$ is that (see \cite{l-t-z}):
\be\label{qsigmaproperty} [t_0,t_1]\times\Omega\subset Q(\sigma)\subset[0,T]\times\Omega\ee
Then for the wave equation of the form
\be\label{carlemaneqn}
w_{tt}(x,t)-\Delta w(x,t)-q(x)w(x,t) = F(x,t), \quad x\in\Omega, t\in[0,T]\ee
we have the following Carleman-type estimate:
\begin{theorem}\label{prop1}
Under the main assumptions (A.1) and (A.2),
with $\varphi(x,t)$ defined in $\eqref{defphi}$. Let $w\in H^{2}(Q)$
be a solution of the equation $\eqref{carlemaneqn}$ where $q\in L^{\infty}(\Omega)$ and
$F\in L_2(Q)$. Then the following one parameter family of estimates
hold true, with $\rho>0$, $\beta>0$, for all $\tau>0$ sufficiently
large and $\epsilon>0$ small:
\begin{multline}\label{carleman}
BT|_w+2\int_Qe^{2\tau\varphi}|F|^2dQ+C_{1,T}e^{2\tau\sigma}\int_Qw^2dQ
\geq (\tau\epsilon\rho-2C_T)\int_Qe^{2\tau\varphi}\left(w_t^2+|\nabla w|^2\right)dQ\\
+\left(2\tau^3\beta+\mathcal{O}(\tau^2)-2C_T\right)\int_{Q{(\sigma)}}e^{2\tau\varphi}w^2dxdt
- c_T\tau^3e^{-2\tau\delta}[E_w(0)+E_w(T)]
\end{multline}
Here $\delta>0$, $\sigma>0$ are the constants in
$\eqref{timesquare}$, $\eqref{propertyb}$, while $C_T$, $c_T$ and
$C_{1,T}$ are positive constants depending on $T$ and $d$. In addition, the
boundary terms $BT|_{w}$ are given explicitly by
\be\label{boundary}
\begin{split}
BT|_{w}& =2\tau\int_0^T\int_{\Gamma_0}e^{2\tau\varphi}(w_t^2-|\nabla w|^2)h\cdot\nu d\Gamma dt \\
            & +8c\tau\int_0^T\int_{\Gamma}e^{2\tau\varphi}(t-\frac{T}{2})w_t\frac{\partial w}{\partial\nu} d\Gamma dt \\
            & +4\tau\int_0^T\int_{\Gamma}e^{2\tau\varphi}(h\cdot\nabla w)\frac{\partial w}{\partial\nu} d\Gamma dt \\
            & +4\tau^2\int_0^T\int_{\Gamma}e^{2\tau\varphi}\left(|h|^2-4c^2(t-\frac{T}{2})^2+\frac{\alpha}{2\tau}\right)w\frac{\partial w}{\partial\nu} d\Gamma dt \\
            & +2\tau\int_0^T\int_{\Gamma_0}e^{2\tau\varphi}\bigg[2\tau^2\left(|h|^2-4c^2(t-\frac{T}{2})^2\right) \\
            & \quad +\tau(\alpha-\Delta d-2c)\bigg]w^2h\cdot\nu d\Gamma dt
\end{split}
\ee
where $\alpha=\Delta d-2c-1+k$ for $0<k<1$ is a constant and $E_w$ is defined as follows:
\be\label{energy}
E_w(t)=\int_{\Omega}[w^2(x,t)+w_t^2(x,t)+|\nabla w(x,t)|^2]d\Omega
\ee
\end{theorem}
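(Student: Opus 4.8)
The plan is to obtain \eqref{carleman} from a pointwise differential identity for the d'Alembertian $\Box w := w_{tt} - \Delta w$, tested against a weighted first-order multiplier and integrated over $Q$; this is the multiplier route behind the estimate of \cite{l-t-z}, and the $H^2(Q)$ regularity of $w$ is exactly what legitimizes the integrations by parts and the boundary traces below. Since the hypothesis is $w_{tt} - \Delta w - qw = F$, I would first write $\Box w = F + qw$ and use $|F + qw|^2 \leq 2|F|^2 + 2\|q\|_\infty^2 w^2$, where $q \in L^\infty(\Omega)$; this is the source of the factor $2$ in front of $\int_Q e^{2\tau\varphi}|F|^2\,dQ$, and the leftover $\|q\|_\infty^2 w^2$ is a genuinely lower-order perturbation absorbed into the interior $w^2$ terms and the correction $C_{1,T}e^{2\tau\sigma}\int_Q w^2\,dQ$. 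Hence it suffices to prove the estimate for the principal operator $\Box$.

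Next I would introduce the multiplier $Mw = h\cdot\nabla w + a\,(t-\tfrac{T}{2})\,w_t + \tfrac{\alpha}{2}\,w$, with $h = \nabla d$, a constant $a$ tied to $c$, and $\alpha = \Delta d - 2c - 1 + k$ as in \eqref{boundary}, and compute $e^{2\tau\varphi}\,\Box w\cdot Mw$ pointwise. Expanding and repeatedly using $\varphi_t = -2c(t-\tfrac{T}{2})$, $\nabla\varphi = \nabla d = h$, $\varphi_{tt} = -2c$, and $\nabla(e^{2\tau\varphi}) = 2\tau h\,e^{2\tau\varphi}$ (the mechanism by which each differentiation of the weight raises the power of $\tau$), every term sorts into (i) a space-time divergence $\operatorname{div}_{x,t}(\cdots)$ whose flux will supply the boundary integrals and the energy contributions at $t=0,T$, and (ii) an interior quadratic form $\mathcal{Q}$ in $(w_t,\nabla w,w)$. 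This expansion is bookkeeping-heavy but routine.

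The decisive step is the positivity of $\mathcal{Q}$, and here the pseudo-convexity assumptions enter. The coefficient of $|\nabla w|^2$ is bounded below using the Hessian condition (A.1)(ii), $(\mathcal{H}_d\,\nabla w,\nabla w)\geq\rho|\nabla w|^2$, which yields the factor $\tau\epsilon\rho$ on $\int_Q e^{2\tau\varphi}|\nabla w|^2\,dQ$; the choice $0<c<1$ (available precisely because $T$ satisfies \eqref{time}) balances this against the time derivative and produces the matching positive coefficient on $\int_Q e^{2\tau\varphi}w_t^2\,dQ$, while the genuinely lower-order coefficients of $\varphi$ contribute only the loss $-2C_T$. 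For the zeroth-order term, three accumulated powers of $\tau$ from the weight give the $\tau^3$ growth, and its sign is secured on $Q(\sigma)$ by assumption (A.2), $|h| = |\nabla d|\geq s > 0$: once $\tau$ is large, $|h|^2$ dominates $4c^2(t-\tfrac{T}{2})^2$ in the coefficient displayed in \eqref{boundary}, giving the $2\tau^3\beta + \mathcal{O}(\tau^2)$ factor on $\int_{Q(\sigma)}e^{2\tau\varphi}w^2\,dx\,dt$.

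Finally I would integrate the pointwise identity over $Q$ and read off the geometric structure. On $\Gamma_1$ the fluxes involving $h\cdot\nu$ vanish by (A.1)(i), $h\cdot\nu = \partial d/\partial\nu = 0$, which is exactly what restricts the $(w_t^2-|\nabla w|^2)h\cdot\nu$ and $w^2 h\cdot\nu$ terms to $\Gamma_0$ while leaving the $\partial w/\partial\nu$ terms over all of $\Gamma$, assembling into the explicit $BT|_w$ of \eqref{boundary}. The temporal flux at $t=0$ and $t=T$ produces $E_w(0)$ and $E_w(T)$; since property (a) gives $\varphi(x,0)=\varphi(x,T)\leq-\delta$, these carry the weight $e^{-2\tau\delta}$, hence the term $-c_T\tau^3 e^{-2\tau\delta}[E_w(0)+E_w(T)]$. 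On the complement of $Q(\sigma)$ in $Q$, where $\varphi<\sigma$ and the sign of the $w^2$ coefficient is not controlled, I would estimate $e^{2\tau\varphi}\leq e^{2\tau\sigma}$ and move those terms to the right, which is the origin of $C_{1,T}e^{2\tau\sigma}\int_Q w^2\,dQ$. The hard part is the uniform-in-$\tau$ positivity of $\mathcal{Q}$: extracting a positive multiple of both $w_t^2$ and $|\nabla w|^2$ simultaneously is a true pseudo-convexity requirement (not automatic for the wave operator, and the reason $T$ must satisfy \eqref{time}), as is guaranteeing the correct sign and $\tau^3$ growth of the $w^2$ coefficient on $Q(\sigma)$; the remaining work is organizational, namely checking that every discarded term is of strictly lower order in $\tau$ and hence absorbable into the $-2C_T$ and $\mathcal{O}(\tau^2)$ slots, and that the fluxes assemble exactly into \eqref{boundary}.
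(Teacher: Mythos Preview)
Your outline is essentially correct and follows the standard multiplier route to Carleman estimates for the wave operator: a pointwise identity for $e^{2\tau\varphi}\Box w$ against a first-order multiplier built from $\nabla\varphi$ and $\varphi_t$, positivity of the resulting interior quadratic form via the Hessian condition (A.1)(ii) and the no-critical-point condition (A.2), and the bookkeeping of boundary and temporal fluxes. The paper itself does \emph{not} supply a proof of this theorem at all; it simply states the result and its corollary and then writes ``For the proof of the above Carleman estimate and the corollary, we refer to \cite{l-t-z} and we omit the details here.'' Your sketch is precisely the strategy carried out in that reference (Lasiecka--Triggiani--Zhang), so there is nothing to compare against in the present paper beyond noting that your proposal fills in what the author chose to quote rather than reprove.
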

An immediate corollary of the estimate is the following (Theorem 6.1 in \cite{l-t-z})
\begin{corollary}
Under the assumptions in Theorem (\ref{prop1}), the following one-parameter family of estimates hold true, for all
$\tau$ sufficiently large, and for any $\epsilon>0$ small:
\be\label{carleman2}
\overline{BT}|_{w}+2\int_0^T\int_{\Omega}e^{2\tau\varphi}F^2dQ+const_{\varphi}\int_0^T\int_{\Omega}F^2dQ\geq k_{\varphi}[E_w(0)+E_w(T)]
\ee
for a constant $k_{\varphi}>0$ while $\overline{BT}|_{w}$ is given by:
\begin{multline}\label{btbar}
\overline{BT}|_{w}=BT|_{w}+const_{\varphi}\left[\int_0^T\int_{\Gamma}\bigg|\frac{\pa w}{\pa\nu}w_t\bigg|d\Gamma dt+\int_{t_0}^{t_1}\int_{\Gamma_0}w^2d\Gamma_0 dt\right. \\
+ \left. \int_0^T\int_{\Gamma_0}|ww_t|d\Gamma_0 dt\right]
\end{multline}
\end{corollary}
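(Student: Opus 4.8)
The plan is to upgrade the one-parameter interior inequality \eqref{carleman}, which only controls \emph{weighted} energy densities, into a genuine bound for the endpoint energy $E_w(0)+E_w(T)$. First I would fix $\tau$ large enough that both bracketed coefficients $(\tau\epsilon\rho-2C_T)$ and $(2\tau^3\beta+\mathcal{O}(\tau^2)-2C_T)$ in \eqref{carleman} are strictly positive, and move the two interior integrals to the left:
\begin{multline*}
(\tau\epsilon\rho-2C_T)\int_Qe^{2\tau\varphi}(w_t^2+|\nabla w|^2)\,dQ+(2\tau^3\beta+\mathcal{O}(\tau^2)-2C_T)\int_{Q(\sigma)}e^{2\tau\varphi}w^2\,dxdt \\
\leq BT|_w+2\int_Qe^{2\tau\varphi}|F|^2\,dQ+C_{1,T}e^{2\tau\sigma}\int_Qw^2\,dQ+c_T\tau^3e^{-2\tau\delta}[E_w(0)+E_w(T)].
\end{multline*}
Both quantities on the left are now positive and controlled by the right-hand side.

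Next I would exploit the inclusion \eqref{qsigmaproperty}, $[t_0,t_1]\times\Omega\subset Q(\sigma)$, on which $\varphi\geq\sigma$ by \eqref{propertyb}, so $e^{2\tau\varphi}\geq e^{2\tau\sigma}$ there. Restricting each left-hand integral to this subcylinder converts it into a lower bound for $e^{2\tau\sigma}\int_{t_0}^{t_1}\int_\Omega(w_t^2+|\nabla w|^2+w^2)$, whence $\int_{t_0}^{t_1}E_w(t)\,dt\leq C\tau^{-1}e^{-2\tau\sigma}\times(\text{right-hand side above})$. To move from this time-interior quantity to the endpoints I would use the energy identity for \eqref{carlemaneqn}: multiplying by $w_t$ and integrating gives $E_w'(t)=\int_\Omega(2ww_t+2qww_t+2Fw_t)+2\int_\Gamma\frac{\pa w}{\pa\nu}w_t$, so $|E_w'(t)|\leq C\,E_w(t)+C\int_\Omega F^2+2\int_\Gamma|\frac{\pa w}{\pa\nu}w_t|$. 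A two-sided Gronwall argument, integrated in the pivot time over $[t_0,t_1]$, then bounds $E_w(0)+E_w(T)$ by $C[\int_{t_0}^{t_1}E_w\,dt+\int_QF^2+\int_0^T\int_\Gamma|\frac{\pa w}{\pa\nu}w_t|]$; this last, purely boundary, term is exactly the first correction appended in \eqref{btbar}.

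It then remains to dispose of the two parasitic terms on the right. The factor $c_T\tau^3e^{-2\tau\delta}$, once multiplied by the $\tau^{-1}e^{-2\tau\sigma}$ coming from the localization, carries weight $c_T\tau^2e^{-2\tau(\sigma+\delta)}\to0$, so for $\tau$ large the endpoint energy it multiplies is reabsorbed into $E_w(0)+E_w(T)$ on the left. For the lower-order integral I would use the energy bound $\int_Qw^2\leq C[E_w(0)+E_w(T)+\int_QF^2+\int_0^T\int_\Gamma|\frac{\pa w}{\pa\nu}w_t|]$ obtained from the same Gronwall step; carrying the coefficient $C_{1,T}\tau^{-1}$ it is again absorbable for large $\tau$. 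To keep the surviving boundary data localized on the observation region, the absorption of the interior $w^2$ is best done through a vector-field identity with $h=\nabla d$, whose boundary contribution vanishes on $\Gamma_1$ by \eqref{assume1} ($h\cdot\nu=0$) and therefore lives only on $\Gamma_0$; this produces precisely the remaining $\int_{t_0}^{t_1}\int_{\Gamma_0}w^2$ and $\int_0^T\int_{\Gamma_0}|ww_t|$ corrections of \eqref{btbar}. Fixing $\tau$ once and for all and relabeling the accumulated constants as $const_\varphi$ and $k_\varphi$ yields \eqref{carleman2}.

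I expect the main obstacle to be the control of the lower-order term $\int_Qw^2$. Unlike the velocity and gradient densities it is not detected by the Carleman weight on the endpoint-heavy region $Q\setminus Q(\sigma)$, and it is not a priori dominated by boundary data; the argument works only because of the \emph{separation of scales} between the $e^{2\tau\sigma}$ weight multiplying $\int_Qw^2$ on the right and the $\tau^3e^{2\tau\varphi}$ weight of the positive $w^2$ integral on the left, so one must verify that the $\tau^3$ gain genuinely dominates on $Q(\sigma)$ and that the residual $O(\tau^{-1})$ multiples of $E_w(0)+E_w(T)$ really can be moved to the left. A secondary care point is ensuring that the energy identity used in the Gronwall step keeps all its boundary contributions either on $\Gamma_0$ (via \eqref{assume1}) or in the retained $\int_\Gamma|\frac{\pa w}{\pa\nu}w_t|$ term, so that the final estimate refers only to the quantities collected in $\overline{BT}|_w$.
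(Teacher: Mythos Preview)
The paper does not actually prove this corollary: the remark immediately following it says explicitly that both Theorem~\ref{prop1} and its corollary are quoted from \cite{l-t-z} and that the details are omitted. So there is no ``paper's own proof'' to compare against line by line. That said, your outline is consistent with the standard Lasiecka--Triggiani--Zhang derivation, and the techniques you invoke are exactly those the paper deploys later in the proof of Theorem~\ref{theoremobserve}: restrict the weighted interior integrals to $[t_0,t_1]\times\Omega\subset Q(\sigma)$ where $\varphi\ge\sigma$, use the energy identity and a two-sided Gronwall estimate to propagate from a pivot time to $t=0$ and $t=T$, and then choose $\tau$ large enough to reabsorb the $c_T\tau^3e^{-2\tau\delta}$ and $C_{1,T}e^{2\tau\sigma}\int_Qw^2$ leftovers. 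Your scale-separation remark (the $\tau^3$ gain on $Q(\sigma)$ versus the $e^{2\tau\sigma}$ on the right) is the correct reason the absorption succeeds.

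One point to sharpen: the extra boundary corrections $\int_{t_0}^{t_1}\int_{\Gamma_0}w^2$ and $\int_0^T\int_{\Gamma_0}|ww_t|$ in $\overline{BT}|_w$ do \emph{not} arise from a multiplier/vector-field identity with $h=\nabla d$, as you suggest. In the \cite{l-t-z} argument (and visibly in the observability proof in this paper, equations \eqref{equivalence}--\eqref{nt}) they come from replacing $E_w(t)$ by the equivalent functional $\varepsilon(t)=\int_\Omega(|\nabla w|^2+w_t^2)+\int_{\Gamma_0}w^2$, which is needed because no Poincar\'e inequality is available under Neumann data; the $\int_{\Gamma_0}|ww_t|$ term is simply the time derivative of the added trace piece. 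Your vector-field route would produce different boundary terms (involving $h\cdot\nu$ and $\nabla w$) that do not match the stated $\overline{BT}|_w$. Aside from this attribution, the skeleton of your argument is correct.
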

\begin{remark}
For the proof of the above Carleman estimate and the corollary, we refer to \cite{l-t-z} and we omit the details here.
\end{remark}

\subsection{Continuous Observability Inequality}

Using the Carleman estimate in last section, we can prove the following observability inequality:
\begin{theorem}\label{theoremobserve}
Under the main assumptions (A.1) and (A.2), for the following initial boundary value problem
\be\label{observe}
\begin{cases}
w_{tt}(x,t) = \Delta w(x,t) + q(x)w(x,t) & \mbox{in } \Omega \times [0,T] \\
w(\cdot,\frac{T}{2}) = w_0(x) & \mbox{in } \Omega \\
w_t(\cdot,\frac{T}{2}) = w_1(x) & \mbox{in } \Omega \\
\frac{\partial w}{\partial \nu}(x,t) = 0 & \mbox{on } \Gamma_1 \times [0,T] \\
\frac{\partial w}{\partial \nu}(x,t) = g(x,t) & \mbox{on } \Gamma_0 \times [0,T]
\end{cases}
\ee
where $w_0\in H^1(\Omega)$, $w_1\in L^2(\Omega)$, $g\in L^2(\Gamma\times[0,T])$ and $q\in L^{\infty}(\Omega)$.
We have the following continuous observability inequality:
\begin{equation*}\label{observeineq}
\|w_0\|^2_{H^1(\Omega)}+\|w_1\|^2_{L^2(\Omega)}\leq C\left(\|w\|^2_{L^2(\Gamma_0\times[0,T])}+\|w_t\|^2_{L^2(\Gamma_0\times[0,T])}+\|g\|^2_{L^2(\Gamma_0\times[0,T])}\right)
\end{equation*}
where $T$ is as in $\eqref{time}$ and $C=C(\Omega,T,\Gamma_0,\varphi,\tau,q)$ is a positive constant.
\end{theorem}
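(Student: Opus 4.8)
The plan is to read the initial boundary value problem \eqref{observe} as the homogeneous wave equation \eqref{carlemaneqn} with source $F\equiv0$, and to feed it into the Carleman corollary \eqref{carleman2}. Since $F=0$, the two interior integrals $\int_Q e^{2\tau\varphi}F^2$ and $const_{\varphi}\int_Q F^2$ drop out and \eqref{carleman2} collapses to $\overline{BT}|_w\geq k_{\varphi}\,[E_w(0)+E_w(T)]$ with $k_{\varphi}>0$. Thus the task reduces to two independent estimates: (i) bounding the boundary functional $\overline{BT}|_w$ from above by the observed quantities $\|w\|^2,\|w_t\|^2,\|g\|^2$ on $\Gamma_0\times[0,T]$; and (ii) bounding the target initial energy $E_w(\frac{T}{2})=\|w_0\|^2_{H^1(\Omega)}+\|w_1\|^2_{L^2(\Omega)}$ from above by $E_w(0)+E_w(T)$ plus observed quantities. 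I would fix $\tau$ once and for all large enough for Theorem \ref{prop1} and its corollary to hold (this is where the geometric hypotheses (A.1), (A.2) enter), so that the weights $e^{2\tau\varphi}$ are bounded above and below on $\overline{Q}$ and can be absorbed into the constant $C=C(\Omega,T,\Gamma_0,\varphi,\tau,q)$.

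For step (i) I would first simplify $\overline{BT}|_w$, given by \eqref{btbar} and \eqref{boundary}, using the boundary conditions of \eqref{observe}. On $\Gamma_1$ we have $\frac{\pa w}{\pa\nu}=0$, so every boundary term carrying a factor $\frac{\pa w}{\pa\nu}$ vanishes there, while the first and last terms of \eqref{boundary} are supported on $\Gamma_0$ from the outset; hence $\overline{BT}|_w$ reduces to integrals over $\Gamma_0$ involving only $w$, $w_t$, $\frac{\pa w}{\pa\nu}=g$ and $\nabla w$. After Cauchy--Schwarz, the contributions of type $ww_t$, $w_tg$, $wg$, $w^2$, $w_t^2$ and $g^2$ are immediately controlled by $\|w\|^2_{L^2(\Gamma_0\times[0,T])}+\|w_t\|^2_{L^2(\Gamma_0\times[0,T])}+\|g\|^2_{L^2(\Gamma_0\times[0,T])}$.

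The one genuinely delicate contribution, and the main obstacle, is the tangential gradient of $w$ on $\Gamma_0$, which is not among the observed data. Decomposing $\nabla w=\nabla_{tan}w+(\frac{\pa w}{\pa\nu})\nu$ on $\Gamma_0$, both the $|\nabla w|^2$ in the first line of \eqref{boundary} and the factor $h\cdot\nabla w$ in the third line produce $\nabla_{tan}w$. I would collect precisely the two dangerous pieces, $-2\tau\int e^{2\tau\varphi}|\nabla_{tan}w|^2\,h\cdot\nu$ and $4\tau\int e^{2\tau\varphi}(h\cdot\nabla_{tan}w)\,g$, and absorb the cross term into the negative quadratic term by a weighted Young inequality with weight $h\cdot\nu$. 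This is exactly where the geometry is essential: the framework underlying (A.1)--(A.2) provides $h\cdot\nu\geq c_0>0$ on $\Gamma_0$ (the counterpart on $\Gamma_0$ of the condition $h\cdot\nu=0$ on $\Gamma_1$), so that $\frac{|h|^2}{h\cdot\nu}$ is bounded and the leftover is $\leq C\int_{\Gamma_0}g^2$, an observed quantity. With the tangential gradient thus eliminated, step (i) yields $E_w(0)+E_w(T)\leq C\big(\|w\|^2+\|w_t\|^2+\|g\|^2\big)_{L^2(\Gamma_0\times[0,T])}$.

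Finally, for step (ii) I would use the energy identity for \eqref{observe}: multiplying the equation by $w_t$, integrating over $\Omega$ and using the boundary conditions gives $\frac{d}{dt}E_w(t)\leq C E_w(t)+\big(\|g(t)\|^2_{L^2(\Gamma_0)}+\|w_t(t)\|^2_{L^2(\Gamma_0)}\big)$, where the boundary flux $\int_{\Gamma_0}g\,w_t$ is handled by Young's inequality precisely because $w_t|_{\Gamma_0}$ is observed. Gronwall's inequality integrated from $0$ (or $T$) to $\frac{T}{2}$ then gives $E_w(\frac{T}{2})\leq C\big(E_w(0)+\|g\|^2_{L^2(\Gamma_0\times[0,T])}+\|w_t\|^2_{L^2(\Gamma_0\times[0,T])}\big)$, and combining with step (i) produces $E_w(\frac{T}{2})\leq C\big(\|w\|^2+\|w_t\|^2+\|g\|^2\big)_{L^2(\Gamma_0\times[0,T])}$. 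Since $E_w(\frac{T}{2})=\|w_0\|^2_{H^1(\Omega)}+\|w_1\|^2_{L^2(\Omega)}$ by \eqref{energy} and the initial conditions in \eqref{observe}, this is exactly the claimed inequality. As the Carleman estimate requires $w\in H^2(Q)$, I would first run the argument for smooth data and then extend to $w_0\in H^1(\Omega)$, $w_1\in L^2(\Omega)$, $g\in L^2(\Gamma_0\times[0,T])$ by a density argument, both sides being continuous in the relevant norms.
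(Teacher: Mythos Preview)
Your overall architecture --- feed \eqref{observe} as $F\equiv0$ into the Carleman corollary \eqref{carleman2}, bound $\overline{BT}|_w$ from above by observed data, then propagate $E_w(0)+E_w(T)$ back to $E_w(\tfrac{T}{2})$ by an energy/Gronwall argument --- matches the paper's strategy. But there is a genuine gap in your step~(i), precisely at the point you yourself flag as ``where the geometry is essential.''

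You assert that the framework (A.1)--(A.2) provides $h\cdot\nu\geq c_0>0$ on $\Gamma_0$. It does not. Assumption \eqref{assume1} only fixes $h\cdot\nu=0$ on $\Gamma_1$; nothing is assumed about the sign of $h\cdot\nu$ on $\Gamma_0$, and in fact, since $\Gamma_0$ and $\Gamma_1$ share the interface $\partial\Gamma_0$ and $h$ is continuous, $h\cdot\nu$ will typically tend to $0$ as one approaches $\partial\Gamma_0$, so a uniform lower bound $c_0>0$ is impossible in general. Your weighted Young inequality $4\tau(h_{tan}\cdot\nabla_{tan}w)g\leq 2\tau(h\cdot\nu)|\nabla_{tan}w|^2+2\tau\frac{|h_{tan}|^2}{h\cdot\nu}g^2$ then blows up, and the absorption of the tangential gradient into the negative quadratic fails. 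Worse, on portions of $\Gamma_0$ where $h\cdot\nu<0$ (not excluded by the hypotheses), the term $-2\tau\int e^{2\tau\varphi}|\nabla_{tan}w|^2\,h\cdot\nu$ is \emph{positive} and cannot be bounded by observed data at all.

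The paper circumvents this by a different device for the tangential gradient: it invokes a sharp trace regularity lemma (Lemma~\ref{tangential}, from \cite{l-t.5}) which, with $f=0$, bounds $\int_{\epsilon}^{T-\epsilon}\int_{\Gamma}|\nabla_{tan}w|^2$ by $\int_0^T\int_{\Gamma}\big(w_t^2+(\tfrac{\partial w}{\partial\nu})^2\big)$ \emph{plus} an interior lower-order term $\|w\|^2_{L^2(0,T;H^{1/2+\epsilon_0}(\Omega))}$. To use it one applies the Carleman corollary on the sub-interval $[\epsilon,T-\epsilon]$ rather than $[0,T]$, and then estimate \eqref{tangential1} handles the first and third lines of \eqref{boundary} with no sign assumption on $h\cdot\nu|_{\Gamma_0}$. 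The price is a ``polluted'' observability inequality \eqref{polluted}; the lower-order pollutant is then removed by a compactness/uniqueness argument invoking the global uniqueness theorem of \cite{l-t-z}. Your step~(ii) is essentially the same as the paper's, though the paper carries the norm $\varepsilon(t)=\int_{\Omega}(|\nabla w|^2+w_t^2)+\int_{\Gamma_0}w^2$, equivalent to $E_w(t)$ via \eqref{equivalence2}, in order to avoid a Poincar\'e-type issue for the pure Neumann problem.
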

\begin{proof}
For the case when $g=0$, we refer to \cite{l-t-z} where the continuous observability inequality is established for zero Neumann data on
the whole boundary. Here we give the proof for the case of general $g\in L^2(\Gamma_0\times[0,T])$, which is still based on
the proof in \cite{l-t-z}. We first introduce the following result that is from the section 7.2 of \cite{l-t.5}.
\begin{lemma}\label{tangential}
Let $w$ be a solution of the equation
\be\label{wave}
w_{tt}(x,t)=\Delta w(x,t)+q(x)w(x,t)+f(x,t) \ \text{in} \ Q=\Omega\times[0,T]
\ee
with $q\in L^{\infty}(\Omega)$ and $w$ in the following class:
\be
\left\{\begin{aligned}
w\in L^2(0,T;H^1(\Omega))\cap H^1(0,T;L^2(\Omega)) \\
w_t, \frac{\pa w}{\pa\nu}\in L^2(0,T;L^2(\Gamma))
\end{aligned}\right.
\ee
Given $\epsilon>0$, $\epsilon_0>0$ arbitrary, given $T>0$, there exists a constant $C=C(\epsilon,\epsilon_0,T)>0$ such that
\begin{multline}\label{tangential1}
\int_{\epsilon}^{T-\epsilon}\int_{\Gamma}|\nabla_{tan}w|^2d\Gamma dt\leq C\left(\int_0^T\int_{\Gamma}w_t^2+\left(\frac{\pa w}{\pa\nu}\right)^2d\Gamma dt+\|w\|^2_{L^2(0,T;H^{\frac{1}{2}+\epsilon_0}(\Omega))} \right. \\
\left. +\|f\|^2_{H^{\frac{1}{2}+\epsilon_0}(Q)}\right)
\end{multline}
\end{lemma}
Now to prove $\eqref{observeineq}$, we first establish the following weaker conclusion under the assumptions (A.1) and (A.2)
\be\label{polluted}
E\left(\frac{T}{2}\right)\leq C\left(\int_0^T\int_{\Gamma_0}[w^2+w_t^2+g^2]d\Gamma_0dt+\|w\|^2_{L^2(0,T;H^{\frac{1}{2}+\epsilon_0}(\Omega))}\right)
\ee
which is the desired inequality $\eqref{observeineq}$ polluted by the interior lower order term $\|w\|$.
To see this, we introduce a preliminary equivalence first. Let $u\in H^1(\Omega)$, then the following inequality holds true:
there exist positive constants $0<k_1<k_2<\infty$, independent of $u$, such that
\be\label{equivalence}
k_1\int_{\Omega}[u^2+|\nabla u|^2]d\Omega\leq\int_{\Omega}|\nabla u|^2d\Omega+\int_{\tilde{\Gamma_0}}u^2d\Gamma\leq k_2\int_{\Omega}[u^2+|\nabla u|^2]d\Omega
\ee
where $\tilde{\Gamma_0}$ is any (fixed) portion of the boundary $\Gamma$ with positive measure. Inequality $\eqref{equivalence}$ is obtained
by combining the following two inequalities:
\be\label{equivalence1}
\int_{\Omega}u^2 d\Omega\leq c_1\left[\int_{\Omega}|\nabla u|^2d\Omega+\int_{\tilde{\Gamma_0}}u^2d\Gamma\right];\quad
\int_{\tilde{\Gamma_0}}u^2d\Gamma\leq c_2\int_{\Omega}[u^2+|\nabla u|^2]d\Omega
\ee
The inequality on the left of $\eqref{equivalence1}$ replaces Poincar\'{e}'s inequality, while the inequality
on the right of $\eqref{equivalence1}$ stems from (a conservative version of) trace theory. Thus, for $w\in H^2(Q)$, if
we introduce
\be
\varepsilon(t)=\int_{\Omega}\left[|\nabla w(t)|^2+w_t^2(t)\right]d\Omega+\int_{\Gamma_0}w^2(t)d\Gamma_1
\ee
where $\Gamma_0=\Gamma\setminus\Gamma_1$ is as defined in the main assumptions, then $\eqref{equivalence}$ yields the equivalence
\be\label{equivalence2}
a E(t)\leq\varepsilon(t)\leq b E(t)
\ee
for some positive constants $a>0$, $b>0$.

Now in a standard way, we multiply equation $\eqref{wave}$ by $w_t$ and integrate over $\Omega$. After an application of
the first Green's identity, we have
\begin{multline}\label{ibp}
\frac{1}{2}\frac{\pa}{\pa t}\left(\int_{\Omega}[w_t^2+|\nabla w|^2]d\Omega+\int_{\Gamma_0}w^2d\Gamma_0\right)=\int_{\Gamma}\frac{\pa w}{\pa\nu}w_td\Gamma+\int_{\Gamma_0}ww_td\Gamma_0 \\
+\int_{\Omega}\left[q(x)+f\right]w_td\Omega
\end{multline}
Notice that on both sides of (\ref{ibp}) we have added term $\dis\frac{1}{2}\frac{\pa}{\pa t}\int_{\Gamma_0}w^2d\Gamma_0
=\int_{\Gamma_0}ww_td\Gamma_0$. Recalling $\varepsilon(t)$ in $\eqref{equivalence2}$, we integrate (\ref{ibp}) over $(s,t)$ and obtain
\be\label{varepsilonrelation}
\varepsilon(t)=\varepsilon(s)+2\int_s^t\left[\int_{\Gamma}\frac{\pa w}{\pa\nu}w_td\Gamma+\int_{\Gamma_0}ww_td\Gamma_0\right]dr+
2\int_s^t\int_{\Omega}\left[q(x)+f\right]w_td\Omega dr
\ee
We apply Cauthy-Schwartz inequality on $[q(x)+f]w_t$, invoke the left hand side $\dis E(t)\leq\frac{1}{a}\varepsilon(t)$ of
$\eqref{equivalence}$, and obtain
\be\label{varepsilont}
\varepsilon(t)\leq [\varepsilon(s)+N(T)]+C_T\int^t_s\varepsilon(r)dr
\ee
\be\label{varepsilons}
\varepsilon(s)\leq [\varepsilon(t)+N(T)]+C_T\int^t_s\varepsilon(r)dr
\ee
where we have set
\be\label{nt}
N(T)=\int^T_0\int_{\Omega}f^2dQ+2\int^T_0\int_{\Gamma}\bigg|\frac{\pa w}{\pa\nu}w_t\bigg|d\Gamma dt+2\int^T_0\int_{\Gamma_0}|ww_t|d\Gamma_0dt
\ee
Gronwall's inequality applied on $\eqref{varepsilont}$, $\eqref{varepsilons}$ then yields for $0\leq s\leq t\leq T$,
\be\label{gronwall}
\varepsilon(t)\leq [\varepsilon(s)+N(T)]e^{C_T(t-s)}; \quad \varepsilon(s)\leq [\varepsilon(t)+N(T)]e^{C_T(t-s)}
\ee
We consider the following three cases here:

\textbf{Case 1:} $0\leq s\leq t\leq \frac{T}{2}$. In this case we set $t=\frac{T}{2}$ and $s=t$ in the first inequality of $\eqref{gronwall}$;
and set $s=0$ in the second inequality of $\eqref{gronwall}$, to obtain
\be\label{case1}
\varepsilon(\frac{T}{2})\leq[\varepsilon(t)+N(T)]e^{C_T\frac{T}{2}}; \quad \varepsilon(0)\leq[\varepsilon(t)+N(T)]e^{C_T\frac{T}{2}}
\ee
Summing up these two inequalities in $\eqref{case1}$ yields for $0\leq t\leq \frac{T}{2}$,
\be\label{desired1}
\begin{split}
\varepsilon(t)& \geq \frac{\varepsilon(\frac{T}{2})+\varepsilon(0)}{2}e^{-C_T\frac{T}{2}}-N(T) \\
              & \geq \frac{a}{2}[E(\frac{T}{2})+E(0)]e^{-C_T\frac{T}{2}}-N(T)
\end{split}
\ee
after recalling the left hand side of the equivalence in $\eqref{equivalence2}$.

\textbf{Case 2:} $\frac{T}{2}\leq s\leq t\leq T$. In this case we set $t=T$ and $s=t$ in the first inequality of $\eqref{gronwall}$;
and set $s=\frac{T}{2}$ in the second inequality of $\eqref{gronwall}$, to obtain
\be\label{case2}
\varepsilon(T)\leq[\varepsilon(t)+N(T)]e^{C_T\frac{T}{2}}; \quad \varepsilon(\frac{T}{2})\leq[\varepsilon(t)+N(T)]e^{C_T\frac{T}{2}}
\ee
Summing up these two inequalities in $\eqref{case2}$ yields for $\frac{T}{2}\leq t\leq T$,
\be\label{desired2}
\begin{split}
\varepsilon(t)& \geq \frac{\varepsilon(\frac{T}{2})+\varepsilon(T)}{2}e^{-C_T\frac{T}{2}}-N(T) \\
              & \geq \frac{a}{2}[E(\frac{T}{2})+E(T)]e^{-C_T\frac{T}{2}}-N(T)
\end{split}
\ee
after recalling the left hand side of the equivalence in $\eqref{equivalence2}$.

\textbf{Case 3:} $0\leq s\leq\frac{T}{2}\leq t\leq T$. In this case we set $t=0$ and $s=t$ in the first inequality of $\eqref{gronwall}$;
and set $s=\frac{T}{2}$ in the second inequality of $\eqref{gronwall}$, to obtain
\be\label{case3}
\varepsilon(0)\leq[\varepsilon(t)+N(T)]e^{C_T\frac{T}{2}}; \quad \varepsilon(\frac{T}{2})\leq[\varepsilon(t)+N(T)]e^{C_T\frac{T}{2}}
\ee
Summing up these two inequalities in $\eqref{case3}$ yields for $\frac{T}{2}\leq t\leq T$,
\be\label{desired3}
\begin{split}
\varepsilon(t)& \geq \frac{\varepsilon(\frac{T}{2})+\varepsilon(0)}{2}e^{-C_T\frac{T}{2}}-N(T) \\
              & \geq \frac{a}{2}[E(\frac{T}{2})+E(0)]e^{-C_T\frac{T}{2}}-N(T)
\end{split}
\ee
after recalling the left hand side of the equivalence in $\eqref{equivalence2}$.

\medskip
In summary, we get for any $0\leq t\leq T$,
\be\label{desired}
\varepsilon(t)\geq\frac{a}{2}E(\frac{T}{2})e^{-C_T\frac{T}{2}}-N(T)
\ee
We now apply the Corollary 2.2 of the Carleman estimate, except on the interval $[\epsilon, T-\epsilon]$,
rather than on $[0,T]$ as in $\eqref{carleman2}$. Thus, we obtain since $f=0$:
\be\label{corollary1}
\overline{BT}|_{[\epsilon,T-\epsilon]\times\Gamma}\geq k_{\varphi}E(\epsilon)
\ee
where $\overline{BT}|_{[\epsilon,T-\epsilon]\times\Gamma}$ is given as in (\ref{btbar}).
Since we have $\dis\frac{\partial w}{\partial \nu}=0$ on  $\Gamma_1\times[0,T]$ and
$\dis\frac{\partial w}{\partial \nu}=g(x,t)$ on $\Gamma_0\times[0,T]$ by $\eqref{observe}$, with the additional information that
$h\cdot\nu=0$ on $\Gamma_1$ by the assumption (A.1). Thus, by using the explicit expression $\eqref{boundary}$ for $BT|_{w}$, we have that
$\overline{BT}|_{[\epsilon,T-\epsilon]\times\Gamma}$ is given by:
\be\label{btbar1}
\begin{split}
\overline{BT}|_{[\epsilon,T-\epsilon]\times\Gamma}& =2\tau\int_{\epsilon}^{T-\epsilon}\int_{\Gamma_0}e^{2\tau\varphi}(w_t^2-|\nabla w|^2)h\cdot\nu d\Gamma dt \\
            & +8c\tau\int_{\epsilon}^{T-\epsilon}\int_{\Gamma_0}e^{2\tau\varphi}(t-\frac{T}{2})w_tg d\Gamma dt \\
            & +4\tau\int_{\epsilon}^{T-\epsilon}\int_{\Gamma_0}e^{2\tau\varphi}(h\cdot\nabla w)g d\Gamma dt \\
            & +4\tau^2\int_{\epsilon}^{T-\epsilon}\int_{\Gamma_0}e^{2\tau\varphi}\left(|h|^2-4c^2(t-\frac{T}{2})^2+\frac{\alpha}{2\tau}\right)wg d\Gamma dt \\
            & +2\tau\int_{\epsilon}^{T-\epsilon}\int_{\Gamma_0}e^{2\tau\varphi}\left[2\tau^2\left(|h|^2-4c^2(t-\frac{T}{2})^2\right)+\tau(\alpha-\Delta d-2c)\right]w^2h\cdot\nu d\Gamma dt \\
            & +const_{\varphi}\left[\int_{\epsilon}^{T-\epsilon}\int_{\Gamma_0}|gw_t|d\Gamma dt+\int_{t_0}^{t_1}\int_{\Gamma_0}w^2d\Gamma_0 dt+\int_{\epsilon}^{T-\epsilon}\int_{\Gamma_0}|ww_t|d\Gamma_0 dt\right]
\end{split}
\ee Next, by the right side of equivalences $\eqref{equivalence2}$
and $\eqref{desired}$, we obtain \be\label{finishing}
E(\epsilon)\geq\frac{\varepsilon(\epsilon)}{b}\geq\frac{a}{2b}E\left(\frac{T}{2}\right)e^{-C_T\frac{T}{2}}-2\int_0^T\int_{\Gamma}|gw_t|d\Gamma
dt-2\int_0^T\int_{\Gamma_0}|ww_t|d\Gamma_0 dt \ee recalling $N(T)$
in $\eqref{nt}$. We use $\eqref{finishing}$ in $\eqref{corollary1}$.
Finally, we invoke estimate (\ref{tangential1}) of Lemma
\ref{tangential} on the first and the third integral terms of
$\eqref{btbar1}$. This way, we readily obtain $\eqref{polluted}$,
which is our desired inequality polluted by
$\|w\|^2_{L^2(0,T;H^{\frac{1}{2}+\epsilon_0}(\Omega))}$. To
eliminate this interior lower order term, we can apply the standard
compactness/uniqueness argument (e.g.\cite{l-t.1}) by invoking the
global uniqueness Theorem 7.1 in \cite{l-t-z}.
\end{proof}

\section{Proof of Theorem \ref{th1}}

We let $\bar{w}=\bar{w}(f)=w_{t}(f)$ then from $\eqref{linear}$ we have $\bar{w}$, $u$ satisfy
\begin{equation}\label{lineary}
\begin{cases}
\bar{w}_{tt}(x,t) - \Delta \bar{w}(x,t) - q(x)\bar{w}(x,t) = f(x)R_{t}(x,t) & \mbox{in } \Omega \times [0,T] \\
\frac{\pa\bar{w}}{\pa\nu}(x,t) = 0 & \mbox{on } \Gamma_1\times[0,T] \\
\bar{w}(x,t) = -u_{tt}(x,t)-\Delta^2 u(x,t)-\Delta^2 u_t(x,t) & \mbox{on } \Gamma_0\times[0,T] \\
u(x,t)=\frac{\pa u}{\pa \nu}(x,t)=0 & \mbox{on } \pa\Gamma_0\times[0,T] \\
\frac{\pa \bar{w}}{\pa \nu}(x,t)=u_{tt}(x,t) & \mbox{on } \Gamma_0\times[0,T] \\
\bar{w}(\cdot,\frac{T}{2}) = 0 & \mbox{in } \Omega \\
\bar{w}_t(\cdot,\frac{T}{2}) = f(x)R(x,\frac{T}{2}) & \mbox{in } \Omega \\
u(\cdot,\frac{T}{2}) = 0 & \mbox{on } \Gamma_0 \\
u_t(\cdot,\frac{T}{2}) = 0 & \mbox{on } \Gamma_0
\end{cases}
\end{equation}
Under the assumptions in Theorem \ref{th1}, we can apply the Carleman estimate to the wave equation in the system $\eqref{lineary}$
$\bar{w}_{tt}(x,t) - \Delta \bar{w}(x,t) - q(x)\bar{w}(x,t) = f(x)R_{t}(x,t)$ and get
\begin{multline*}\label{carlemany}
BT|_{\bar{w}}+2\int_Qe^{2\tau\varphi}|fR_{t}|^2
dQ+C_{1,T}e^{2\tau\sigma}\int_Q \bar{w}^2dQ
\geq (\tau\epsilon\rho-2C_T)\int_Qe^{2\tau\varphi}[\bar{w}_t^2+|\nabla \bar{w}|^2]dQ \\
+
[2\tau^3\beta+\mathcal{O}(\tau^2)-2C_T]\int_{Q{(\sigma)}}e^{2\tau\varphi}\bar{w}^2dxdt
- c_T\tau^3e^{-2\tau\delta}[E_{\bar{w}}(0)+E_{\bar{w}}(T)]
\end{multline*}
where the boundary terms are given explicitly by \be\label{boundaryy}
\begin{split}
BT|_{\bar{w}}& =2\tau\int_0^T\int_{\Gamma_0}e^{2\tau\varphi}(\bar{w}_t^2-|\nabla \bar{w}|^2)h\cdot\nu d\Gamma dt \\
            & +8c\tau\int_0^T\int_{\Gamma}e^{2\tau\varphi}(t-\frac{T}{2})\bar{w}_t\frac{\partial \bar{w}}{\partial\nu} d\Gamma dt \\
            & +4\tau\int_0^T\int_{\Gamma}e^{2\tau\varphi}(h\cdot\nabla \bar{w})\frac{\partial \bar{w}}{\partial\nu} d\Gamma dt \\
            & +4\tau^2\int_0^T\int_{\Gamma}e^{2\tau\varphi}\left(|h|^2-4c^2(t-\frac{T}{2})^2+\frac{\alpha}{2\tau}\right)\bar{w}\frac{\partial \bar{w}}{\partial\nu} d\Gamma dt \\
            & +2\tau\int_0^T\int_{\Gamma_0}e^{2\tau\varphi}\bigg[2\tau^2\left(|h|^2-4c^2(t-\frac{T}{2})^2\right) \\
            & \quad + \tau(\alpha-\Delta d-2c)\bigg]\bar{w}^2h\cdot\nu d\Gamma dt
\end{split}
\ee
Since we have the extra observation that $u_{tt}(x,t)=0$ on
$\Gamma_0\times[0,T]$ and note that the initial conditions $u(x,\frac{T}{2})=u_t(x,\frac{T}{2})=0$ on $\Gamma_0$, thus by the fundamental theorem of calculus we have
$u(x,t)=0$ on $\Gamma_0\times[0,T]$ and hence from the coupling in the system $\eqref{lineary}$ we get
\be\label{boundaryterm1}
\bar{w}(x,t)=-u_{tt}(x,t)-\Delta^2 u(x,t)-\Delta^2 u_t(x,t)=0 \ \textrm{on} \ \Gamma_0\times[0,T]
\ee
and
\be\label{boundaryterm2}
\frac{\pa\bar{w}}{\pa\nu}(x,t)=u_{tt}(x,t)=0 \ \textrm{on} \ \Gamma_0\times[0,T]
\ee
Plugging $\eqref{boundaryterm1}$ and $\eqref{boundaryterm2}$ into $\eqref{boundaryy}$, note also that $\dis\frac{\pa\bar{w}}{\pa\nu}=0$ on $\Gamma_1\times[0,T]$,
therefore we get $BT|_{\bar{w}}\equiv0$.

In addition, in view of $\eqref{regR}$, $\eqref{crucialR}$, we have $|fR_{t}|\leq C|f|$ for some positive constant $C$ depend on $R_t$. Moreover,
notice that $\dis\lim_{\tau\to\infty}\tau^3e^{-2\tau\delta}=0$. Hence when $\tau$ is sufficiently large, the above Carleman estimate can be rewritten as the following:
\begin{equation}\label{ineq2}
C_{1,\tau}\int_Qe^{2\tau\varphi}[\bar{w}_t^2+|\nabla \bar{w}|^2]dQ+C_{2,\tau}\int_{Q(\sigma)}e^{2\tau\varphi}\bar{w}^2dxdt
\leq C\int_Qe^{2\tau\varphi}|f|^2dQ+Ce^{2\tau\sigma}
\end{equation}
where we set \be\label{ctau} C_{1,\tau}=\tau\epsilon\rho-2C_T, \quad
C_{2,\tau}=2\tau^3\beta+\mathcal{O}(\tau^2)-2C_T\ee
and $C$ denote generic constants which do not depend on $\tau$ and henceforth we will use this notation for the rest of this paper.
In addition, note that $f$ is time-independent, so if we differentiate the system $\eqref{lineary}$ in time twice, we can get
the following wave equations for $\bar{w_t}$ and $\bar{w_{tt}}$:
\be\label{eqnfrtt}
(\bar{w_t})_{tt}(x,t) - \Delta \bar{w_t}(x,t) - q(x)\bar{w_t}(x,t) = f(x)R_{tt}(x,t)
\ee
and
\be\label{eqnfrtt}
(\bar{w_{tt}})_{tt}(x,t) - \Delta \bar{w_{tt}}(x,t) - q(x)\bar{w_{tt}}(x,t) = f(x)R_{ttt}(x,t)
\ee
Notice the assumptions $\eqref{regR}$, $\eqref{h2reg}$, therefore we have similarly as $\eqref{ineq2}$ the following estimates for the
two new systems:
\begin{multline}\label{ineq3}
C_{1,\tau}\int_Qe^{2\tau\varphi}[\bar{w}_{tt}^2+|\nabla \bar{w}_t|^2]dQ+C_{2,\tau}\int_{Q(\sigma)}e^{2\tau\varphi}\bar{w}_t^2dxdt
\leq C\int_Qe^{2\tau\varphi}|f|^2dQ+Ce^{2\tau\sigma}
\end{multline}
and
\begin{multline}\label{ineq4}
C_{1,\tau}\int_Qe^{2\tau\varphi}[\bar{w}_{ttt}^2+|\nabla \bar{w}_{tt}|^2]dQ+C_{2,\tau}\int_{Q(\sigma)}e^{2\tau\varphi}\bar{w}_{tt}^2dxdt
\leq C\int_Qe^{2\tau\varphi}|f|^2dQ+Ce^{2\tau\sigma}
\end{multline}
where $\tau$ is sufficiently large and $C_{1,\tau}$, $C_{2,\tau}$ are defined as in $\eqref{ctau}$.

Adding $(\ref{ineq2})$, $(\ref{ineq3})$ and $(\ref{ineq4})$ together we then have
\begin{multline}\label{ineq5}
C_{1,\tau}\int_Qe^{2\tau\varphi}[\bar{w}_t^2+\bar{w}_{tt}^2+\bar{w}_{ttt}^2+|\nabla
\bar{w}|^2+|\nabla \bar{w}_t|^2+|\nabla \bar{w}_{tt}|^2]dQ \\+
C_{2,\tau}\int_{Q(\sigma)}e^{2\tau\varphi}[\bar{w}^2+\bar{w}_t^2+\bar{w}_{tt}^2]dxdt
\leq C\left(\int_Qe^{2\tau\varphi}|f|^2 dQ+e^{2\tau\sigma}\right)
\end{multline}
Again we use the wave equation $\bar{w}_{tt}(x,t)-\Delta \bar{w}(x,t)-q(x)\bar{w}(x,t)=f(x)R_{t}(x,t)$,
plugging in the initial time of $t=\frac{T}{2}$ and use the zero initial conditions of $\bar{w}(\cdot,\frac{T}{2})=0$,
we have
\be\label{initial}
\bar{w}_{tt}(x,\frac{T}{2})-\Delta\bar{w}(x,\frac{T}{2})-q(x)\bar{w}(x,\frac{T}{2})=\bar{w}_{tt}(x,\frac{T}{2})=f(x)R_t(x,\frac{T}{2})\ee
Since $|R_t(x,\frac{T}{2})|\geq r_1>0$ from $\eqref{crucialR}$,
therefore we have $|f(x)|\leq C|\bar{w}_{tt}(x,\frac{T}{2})|$
and hence we have the following estimates on
$\dis\int_Qe^{2\tau\varphi}|f|^2dQ$: \be\label{mainineq}
\begin{split}
\int_Qe^{2\tau\varphi}|f|^2dQ& =\int_0^T\int_{\Omega}e^{2\tau\varphi(x,t)}|f(x)|^2d\Omega dt \\
                             & \leq C\int_0^T\int_{\Omega}e^{2\tau\varphi(x,t)}|\bar{w}_{tt}(x,\frac{T}{2})|^2d\Omega dt \\
                             & \leq C\int_{\Omega}e^{2\tau\varphi(x,\frac{T}{2})}|\bar{w}_{tt}(x,\frac{T}{2})|^2d\Omega \\
                             & = C\left(\int_{\Omega}\int_0^{\frac{T}{2}} \frac{d}{ds}(e^{2\tau\varphi(x,s)}|\bar{w}_{tt}(x,s)|^2)ds d\Omega+\int_{\Omega}e^{2\tau\varphi(x,0)}|\bar{w}_{tt}(x,0)|^2d\Omega \right) \\
                             & = C\left(4c\tau\int_{\Omega}\int_0^{\frac{T}{2}}(\frac{T}{2}-s)e^{2\tau\varphi(x,s)}|\bar{w}_{tt}(x,s)|^2dsd\Omega \right. \\
                             & \quad + \left.2\int_{\Omega}\int_0^{\frac{T}{2}}e^{2\tau\varphi}|\bar{w}_{tt}(x,s)||\bar{w}_{ttt}(x,s)|dsd\Omega+\int_{\Omega}e^{2\tau\varphi(x,0)}|\bar{w}_{tt}(x,0)|^2d\Omega\right) \\
                             & \leq C\left(\tau\int_{\Omega}\int^{\frac{T}{2}}_0e^{2\tau\varphi}|\bar{w}_{tt}|^2 dt d\Omega
                                       + \int_{\Omega}\int^{\frac{T}{2}}_0 e^{2\tau\varphi}(|\bar{w}_{tt}|^2+|\bar{w}_{ttt}|)^2dt d\Omega \right. \\
                             & \quad + \left.\int_{\Omega}|\bar{w}_{tt}(x,0)|^2d\Omega \right) \\
                             & \leq C\left(\tau\int_Q e^{2\tau\varphi}|\bar{w}_{tt}|^2dQ+\int_Qe^{2\tau\varphi}(|\bar{w}_{tt}|^2+|\bar{w}_{ttt}|^2)dQ\right) \\
                             & = C\left((\tau+1)\int_Q e^{2\tau\varphi}|\bar{w}_{tt}|^2dQ+\int_Qe^{2\tau\varphi}|\bar{w}_{ttt}|^2dQ\right)
\end{split}
\ee
where in the above estimates we use the definition $\eqref{defphi}$ and the property
$\eqref{propertya}$ of $\varphi$ as well as Cauthy-Schwartz inequality.
Collecting $\eqref{mainineq}$ with $(\ref{ineq5})$, we have
\begin{multline}\label{ineq6}
C_{1,\tau}\int_Qe^{2\tau\varphi}[\bar{w}_t^2+\bar{w}_{tt}^2+\bar{w}_{ttt}^2+|\nabla
\bar{w}|^2+|\nabla \bar{w}_t|^2+|\nabla \bar{w}_{tt}|^2]dQ \\+
C_{2,\tau}\int_{Q(\sigma)}e^{2\tau\varphi}[\bar{w}^2+\bar{w}_t^2+\bar{w}_{tt}^2]dxdt
\leq C\left((\tau+1)\int_Q e^{2\tau\varphi}|\bar{w}_{tt}|^2dQ+\int_Qe^{2\tau\varphi}|\bar{w}_{ttt}|^2dQ+e^{2\tau\sigma}\right)
\end{multline}
Note that in $(\ref{ineq6})$, the right hand side term
$C\int_Qe^{2\tau\varphi}|\bar{w}_{ttt}|^2dQ$ can be absorbed by the term $C_{1,\tau}\int_Qe^{2\tau\varphi}[\bar{w}_t^2+\bar{w}_{tt}^2+\bar{w}_{ttt}^2]dQ$
on the left hand side when $\tau$ is large enough. In addition, since $e^{2\tau\varphi}<e^{2\tau\sigma}$ on $Q\setminus Q(\sigma)$ by the definition of $Q(\sigma)$, we have
\be\label{absorb}
\begin{split}
C(\tau+1)\int_Q e^{2\tau\varphi}|\bar{w}_{tt}|^2dQ
& =C(\tau+1)\left(\int_{Q(\sigma)}e^{2\tau\varphi}|\bar{w}_{tt}|^2dtdx+\int_{Q\setminus Q(\sigma)}e^{2\tau\varphi}|\bar{w}_{tt}|^2dxdt\right) \\
& \leq C(\tau+1)\left(\int_{Q(\sigma)}e^{2\tau\varphi}|\bar{w}_{tt}|^2dtdx+e^{2\tau\sigma}\int_{Q\setminus Q(\sigma)}|\bar{w}_{tt}|^2dxdt\right) \\
& \leq C(\tau+1)\int_{Q(\sigma)}e^{2\tau\varphi}|\bar{w}_{tt}|^2dtdx+C(\tau+1)e^{2\tau\sigma}
\end{split}
\ee
Again $C(\tau+1)\int_{Q(\sigma)}e^{2\tau\varphi}|\bar{w}_{tt}|^2dtdx$ on
the right hand side of $\eqref{absorb}$ can be absorbed by $C_{2,\tau}\int_{Q(\sigma)}e^{2\tau\varphi}[\bar{w}^2+\bar{w}_t^2+\bar{w}_{tt}^2]dxdt$
on the left hand side of $(\ref{ineq6})$ when taking $\tau$ large enough. Therefore $(\ref{ineq6})$ becomes to
\begin{multline}\label{ineq7}
C_{1,\tau}^{'}\int_Qe^{2\tau\varphi}[\bar{w}_t^2+\bar{w}_{tt}^2+\bar{w}_{ttt}^2+|\nabla
\bar{w}|^2+|\nabla \bar{w}_t|^2+|\nabla \bar{w}_{tt}|^2]dQ \\+
C_{2,\tau}^{'}\int_{Q(\sigma)}e^{2\tau\varphi}[\bar{w}^2+\bar{w}_t^2+\bar{w}_{tt}^2]dxdt
\leq C\left((\tau+1)e^{2\tau\sigma}+e^{2\tau\sigma}+\tau^3e^{-2\tau\delta}\right)
\end{multline}
Where we have \be\label{ctauprime} C_{1,\tau}^{'}=\tau\epsilon\rho-C, \quad
C_{2,\tau}^{'}=2\tau^3\beta+\mathcal{O}(\tau^2) \ee
Now we take $\tau$ sufficiently large such that $C_{1,\tau}^{'}>0$, $C_{2,\tau}^{'}>0$. Then in $(\ref{ineq7})$ we can drop
the first term on the left hand side and get \be\label{ineq8}
\begin{split}
C_{2,\tau}^{'}\int_{Q(\sigma)}e^{2\tau\varphi}[\bar{w}^2+\bar{w}_t^2+\bar{w}_{tt}^2]dxdt& \leq C[(\tau+1)e^{2\tau\sigma}+e^{2\tau\sigma}] \\
                                                                      & \leq C(\tau+2)e^{2\tau\sigma}
\end{split}
\ee
Note again from $\eqref{qsigma}$ the definition of $Q(\sigma)$, we
have $e^{2\tau\varphi}\geq e^{2\tau\sigma}$ on $Q(\sigma)$, therefore
$(\ref{ineq8})$ implies
\be\label{ineq9}
C_{2,\tau}^{'}\int_{Q(\sigma)}[\bar{w}^2+\bar{w}_t^2+\bar{w}_{tt}^2]dxdt
\leq C(\tau+2) \ee
Divide $\tau+2$ on both sides of $\eqref{ineq9}$,
we get \be\label{ineq10}
\frac{C_{2,\tau}^{'}}{\tau+2}\int_{Q(\sigma)}[\bar{w}^2+\bar{w}_t^2+\bar{w}_{tt}^2]dxdt
\leq C \ee
By $\eqref{ctauprime}$, $\frac{C_{2,\tau}^{'}}{\tau+2}\to\infty$ as
$\dis\tau\to\infty$, thus $\eqref{ineq10}$ implies that we must have
$\bar{w}\equiv0$ on $Q(\sigma)$ and hence we have
\be\label{identity}
f(x)R_t(x,t)=\bar{w}_{tt}(x,t)-\Delta \bar{w}(x,t)-q(x)\bar{w}(x,t)=0, \quad (x,t)\in Q(\sigma) \ee
Recall again that $|R_t(x,\frac{T}{2})|\geq r_1>0$ from$\eqref{crucialR}$ and the property that
$Q\supset Q(\sigma)\supset[t_0,t_1]\times\Omega$ from
$\eqref{qsigmaproperty}$. Thus we have from $\eqref{identity}$ that
$f(x)\equiv0$, for all $x\in\Omega$. $\qquad\Box$

\section{Proof of Theorem \ref{th2}}

Setting $f(x)=q(x)-p(x)$, $w(x,t)=z(q)(x,t)-z(p)(x,t)$, $u(x,t)=v(q)(x,t)-v(p)(x,t)$ and $R(x,t)=z(p)(x,t)$,
we then obtain $\eqref{linear}$ after the subtraction of
$\eqref{nonlinear}$ with $p$ from $\eqref{nonlinear}$ with $q$. Since $R(x,\frac{T}{2})=z(p)(x,\frac{T}{2})=z_0(x)$ and
$R_t(x,\frac{T}{2})=z_t(p)(x,\frac{T}{2})=z_1(x)$, the conditions $\eqref{crucialz}$ imply $\eqref{crucialR}$.
In addition, the condition $v(q)(x,t)=v(p)(x,t)$, $x\in\Gamma_0$, $t\in[0,T]$ implies that $u(x,t)=0$ on $\Gamma_0\times[0,T]$ and
$\eqref{differenceh2}$ implies $\eqref{h2reg}$.
Therefore from the above Theorem \ref{th1} we conclude $f(x)=q(x)-p(x)=0$, i.e., $q(x)=p(x)$, $x\in\Omega$. \hfill $\Box$

\section{Proof of Theorem \ref{th3}}

In relation with this system $\eqref{lineary}$, we define $\psi$ which satisfies the following equation
\begin{equation}\label{eqpsi}
\begin{cases}
\psi_{tt}(x,t) = \Delta \psi(x,t) + q(x)\psi(x,t) & \mbox{in } \Omega \times [0,T] \\
\frac{\pa\psi}{\pa\nu}(x,t) = 0 & \mbox{on } \Gamma_1\times[0,T] \\
\frac{\pa \psi}{\pa \nu}(x,t) = u_{tt}(x,t) & \mbox{on } \Gamma_0\times[0,T] \\
\psi(\cdot,\frac{T}{2}) = 0 & \mbox{in } \Omega \\
\psi_t(\cdot,\frac{T}{2}) = f(x)R(x,\frac{T}{2}) & \mbox{in } \Omega
\end{cases}
\end{equation}
Set  $y=\bar{w}-\psi$, then we have $y$ satisfies the following initial-boundary value problem
\begin{equation}\label{eqy}
\begin{cases}
y_{tt}(x,t) - \Delta y(x,t) - q(x)y(x,t) = f(x)R_t(x,t) & \mbox{in } \Omega \times [0,T] \\
\frac{\pa y}{\pa\nu}(x,t) = 0 & \mbox{on } \Gamma\times[0,T] \\
y(\cdot,\frac{T}{2}) = 0 & \mbox{in } \Omega \\
y_t(\cdot,\frac{T}{2}) = 0 & \mbox{in } \Omega \\
\end{cases}
\end{equation}
It is easy to see that both $\eqref{eqpsi}$ and $\eqref{eqy}$ are well-posed.
For the system $\eqref{eqpsi}$, we apply the continuous observability inequality in Theorem \ref{theoremobserve} to get
\be\label{ineqfr}
\|fR(\cdot,\frac{T}{2})\|^2_{L^2(\Omega)}\leq C\left(\|\psi\|^2_{L^2(\Gamma_0\times[0,T])}+\|\psi_t\|^2_{L^2(\Gamma_0\times[0,T])}+\|\frac{\pa\psi}{\pa\nu}\|^2_{L^2(\Gamma_0\times[0,T])}\right)\ee
Notice that $|R(x,\frac{T}{2})|\geq r_0>0$, $\frac{\pa \psi}{\pa \nu}(x,t) = u_{tt}(x,t)$ on $\Gamma_0\times[0,T]$ and
$\frac{\pa \psi}{\pa \nu}(x,t) = 0$ on $\Gamma_1\times[0,T]$, therefore we have from $\eqref{ineqfr}$
\be\label{ineqfr1}
\|f\|_{L^2(\Omega)}\leq C\left(\|\psi\|_{L^2(\Gamma_0\times[0,T])}+\|\psi_t\|_{L^2(\Gamma_0\times[0,T])}+\|u_{tt}\|_{L^2(\Gamma_0\times[0,T])}\right)\ee
On the other hand, for the system $\eqref{eqy}$, we have the following lemma:
\begin{lemma}
Let $q\in L^{\infty}(\Omega)$ and $R(x,t)$ satisfies $R_t\in H^{\frac{1}{2}+\epsilon}(0,T;L^{\infty}(\Omega))$
for some $0<\epsilon<\frac{1}{2}$ as in Theorem \ref{th3}. If we define the operators $K$ and $K_1$ by
$K, K_1: L^2(\Omega)\rightarrow L^2(\Gamma_0\times[0,T])$, such that
\be\label{operator}
(Kf)(x,t)=y(x,t), \quad (K_1f)(x,t)=y_t(x,t), \quad x\in\Gamma_0,t\in[0,T]\ee
where $y$ is the unique solution of the equation $\eqref{eqy}$. Then $K$ and $K_1$  are both compact operators.
\end{lemma}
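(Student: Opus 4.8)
The plan is to prove compactness of $K$ and $K_1$ by showing that each factors as the composition of a bounded solution operator into a space that embeds \emph{compactly} into $L^2(\Gamma_0\times[0,T])$. The source term in \eqref{eqy} is $f(x)R_t(x,t)$, and the key structural fact is that $f\mapsto fR_t$ gains regularity in time from the hypothesis $R_t\in H^{\frac{1}{2}+\epsilon}(0,T;L^\infty(\Omega))$. First I would establish the natural a priori estimate for the pure-Neumann, zero-data wave problem \eqref{eqy}: by standard semigroup or energy theory together with the sharp hidden-regularity (trace) results for the Neumann wave equation (as in \cite{l-t.5}), the map $F\mapsto (y|_{\Gamma_0},y_t|_{\Gamma_0})$ is bounded from $L^2(Q)$, and moreover from $H^s(0,T;L^2(\Omega))$ into a space of boundary traces with $s$ extra time-smoothness. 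The point of assuming $R_t\in H^{\frac{1}{2}+\epsilon}$ rather than merely $L^\infty$ in time is precisely to produce this surplus $\tfrac{1}{2}+\epsilon$ of temporal regularity in the source.

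The main steps are as follows. For any $f\in L^2(\Omega)$, the source satisfies $fR_t\in H^{\frac{1}{2}+\epsilon}(0,T;L^2(\Omega))$ with $\|fR_t\|_{H^{\frac{1}{2}+\epsilon}(0,T;L^2(\Omega))}\leq C\|R_t\|_{H^{\frac{1}{2}+\epsilon}(0,T;L^\infty(\Omega))}\|f\|_{L^2(\Omega)}$, so the data-to-source map is bounded. Next I would show the solution operator sends this source into a solution $y$ lying, together with its first time derivative, in a space with positive smoothness in both $t$ and the tangential boundary variables; concretely, by differentiating in $t$ and using interior/boundary regularity for the Neumann problem, one obtains $(Kf,K_1f)\in H^{\delta}(\Gamma_0\times[0,T])$ for some $\delta>0$, with norm controlled by $\|f\|_{L^2(\Omega)}$. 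Finally, since $\Gamma_0\times[0,T]$ is a bounded domain, the embedding $H^{\delta}(\Gamma_0\times[0,T])\hookrightarrow L^2(\Gamma_0\times[0,T])$ is compact by Rellich--Kondrachov. Thus $K$ and $K_1$ each factor as a bounded map followed by a compact embedding, hence are compact.

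The hard part will be bookkeeping the regularity gain precisely enough to land in a space with \emph{strictly positive} Sobolev index on the lateral boundary $\Gamma_0\times[0,T]$, since a gain only in the time variable is not by itself enough to guarantee compactness of the boundary trace in $L^2(\Gamma_0\times[0,T])$ — one also needs control of tangential boundary regularity. Here I expect to lean on the sharp trace theory for second-order hyperbolic equations with Neumann data, in particular the tangential-derivative estimate of Lemma \ref{tangential} together with the hidden-regularity bound $y_t,\frac{\pa y}{\pa\nu}\in L^2(\Gamma\times[0,T])$, to upgrade the $\tfrac{1}{2}+\epsilon$ temporal smoothness of the source into genuine (space-time) fractional regularity of the traces $y|_{\Gamma_0}$ and $y_t|_{\Gamma_0}$. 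The reason the hypothesis requires $\epsilon>0$ is exactly that the borderline case $\epsilon=0$ would produce traces only in $L^2$, for which the embedding fails to be compact; any strictly positive surplus suffices to close the argument.
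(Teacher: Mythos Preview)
Your overall strategy---show that $f\mapsto fR_t$ is bounded into $H^{\frac12+\epsilon}(0,T;L^2(\Omega))$, then use the resulting regularity surplus to land the boundary traces in a space compactly embedded in $L^2(\Gamma_0\times[0,T])$---is the same mechanism the paper uses. Where you diverge is in the implementation: you aim to produce space--time regularity $H^\delta(\Gamma_0\times[0,T])$ of the traces directly, and you anticipate needing the tangential estimate of Lemma~\ref{tangential} to manufacture spatial regularity on $\Gamma_0$. The paper instead takes a more economical interior route: from $fR_t\in H^{\frac12+\epsilon}(0,T;L^2(\Omega))$ it invokes the sharp Neumann regularity theory (Corollary~5.3 of \cite{l-t.4}) to obtain $y\in C([0,T];H^{\frac32+\epsilon}(\Omega))$ and $y_t\in C([0,T];H^{\frac12+\epsilon}(\Omega))$, reads off $y_{tt}\in L^2(0,T;H^{-\frac12+\epsilon}(\Omega))$ from the equation, and then applies the trace theorem to get $y_t|_{\Gamma}\in C([0,T];H^{\epsilon}(\Gamma))$. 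Compactness of $K_1$ then follows from the compact embedding $H^{\epsilon}(\Gamma)\hookrightarrow L^2(\Gamma)$ together with the Lions--Aubin criterion, and $K$ is handled in the same way.

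The advantage of the paper's route is that the spatial regularity you were worried about comes for free from the interior estimate followed by a standard trace, so there is no need to invoke Lemma~\ref{tangential} or to assemble a genuine $H^\delta$ norm on the lateral boundary. Your approach would work in principle, but the bookkeeping you flag as ``the hard part'' is avoided entirely by passing through the interior regularity gain first; in particular, the compactness input is Lions--Aubin (using $y_{tt}$ as the time-derivative bound) rather than Rellich--Kondrachov on $\Gamma_0\times[0,T]$.
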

\begin{proof}
It suffices to just show that $K_1$ is compact, then it follows similarly that $K$ is also compact.
Since $f\in L^2(\Omega)$ and $R_t\in H^{\frac{1}{2}+\epsilon}(0,T;L^{\infty}(\Omega))$, we have
\be\label{regfRt}
fR_t\in H^{\frac{1}{2}+\epsilon}(0,T;L^2(\Omega))\ee
Therefore we have the solution $y$ satisfies (e.g. Corollary 5.3 in \cite{l-t.4})
\be\label{sharpy}
y\in C([0,T];H^{\frac{3}{2}+\epsilon}(\Omega)), \quad y_t\in C([0,T];H^{\frac{1}{2}+\epsilon}(\Omega))
\ee
Hence by $\eqref{regfRt}$, $q\in L^{\infty}(\Omega)$ and $y_{tt}=\Delta y+q(x)y+fR_t$ we can get
\be\label{regytt}
y_{tt}\in L^2(0,T;H^{-\frac{1}{2}+\epsilon}(\Omega))
\ee
In addition, by $\eqref{sharpy}$ and trace theorem we have $y_t\in C([0,T];H^{\epsilon}(\Gamma))$.
Since the embedding $H^{\epsilon}(\Gamma)\to L^2(\Gamma)$ is compact, we have by Lions-Aubin's compactness criterion
(e.g. Proposition III.1.3 in \cite{s}) that the operator $K_1$ is a compact operator.
\end{proof}
Now we have that the inequality $\eqref{ineqfr1}$ becomes to
\be\label{ineq11}
\begin{split}
\|f\|_{L^2(\Omega)} & \leq C\left(\|\psi\|_{L^2(\Gamma_0\times[0,T])}+\|\psi_t\|_{L^2(\Gamma_0\times[0,T])}+\|u_{tt}\|_{L^2(\Gamma_0\times[0,T])}\right) \\
                    & \leq C\left(\|\bar{w}-y\|_{L^2(\Gamma_0\times[0,T])}+\|\bar{w}_t-y_t\|_{L^2(\Gamma_0\times[0,T])}+\|u_{tt}\|_{L^2(\Gamma_0\times[0,T])}\right) \\
                    & \leq C\left(\|\bar{w}\|_{L^2(\Gamma_0\times[0,T])}+\|\bar{w}_t\|_{L^2(\Gamma_0\times[0,T])}+\|u_{tt}\|_{L^2(\Gamma_0\times[0,T])}\right) \\
 & \qquad +C\|y\|_{L^2(\Gamma_0\times[0,T])}+C\|y_t\|_{L^2(\Gamma_0\times[0,T])} \\
                    & = C\left(\|\bar{w}\|_{L^2(\Gamma_0\times[0,T])}+\|\bar{w}_t\|_{L^2(\Gamma_0\times[0,T])}+\|u_{tt}\|_{L^2(\Gamma_0\times[0,T])}\right) \\
 & \qquad +C\|Kf\|_{L^2(\Gamma_0\times[0,T])}+C\|K_1f\|_{L^2(\Gamma_0\times[0,T])} \\
                   & \leq C\left(\|u_{tt}\|_{L^2(\Gamma_0\times[0,T])}+\|u_{ttt}\|_{L^2(\Gamma_0\times[0,T])}+\|\Delta^2u_{tt}\|_{L^2(\Gamma_0\times[0,T])}\right) \\
& \qquad +C\|Kf\|_{L^2(\Gamma_0\times[0,T])}+C\|K_1f\|_{L^2(\Gamma_0\times[0,T])}
\end{split}
\ee
where in the last step we use the
coupling $\bar{w}(x,t) = -u_{tt}(x,t)-\Delta^2 u(x,t)-\Delta^2
u_t(x,t)$ on $\Gamma_0\times[0,T]$ from $\eqref{lineary}$ and again
the initial conditions $u(\cdot,\frac{T}{2})=u_t(\cdot,\frac{T}{2})=0$ on $\Gamma_0\times[0,T]$ so that by the
fundamental theorem of calculus, we have
\be\label{poincare}
\|u\|_{L^2(\Gamma_0\times[0,T])}\leq C\|u_t\|_{L^2(\Gamma_0\times[0,T])}\leq C\|u_{tt}\|_{L^2(\Gamma_0\times[0,T])}
\ee
To complete the proof, we need to absorb the last two terms in $\eqref{ineq11}$. To achieve that, we apply the
compactness-uniqueness argument. For simplicity we denote
\begin{equation*}
\|u\|_X=\|u_{tt}\|_{L^2(\Gamma_0\times[0,T])}+\|u_{ttt}\|_{L^2(\Gamma_0\times[0,T])}
+\|\Delta^2u_{tt}\|_{L^2(\Gamma_0\times[0,T])}
\end{equation*}
Suppose contrarily that the inequality $\eqref{stability}$
does not hold. Then there exists $f_n\in L^2(\Omega)$, $n\geq 1$
such that \be\label{contrary1} \|f_n\|_{L^2(\Omega)}=1, \quad n\geq
1 \ee and \be\label{contrary2}
\lim_{n\to\infty}\|u(f_n)\|_{X}=0
\ee
From $\eqref{contrary1}$, there exists a subsequence, denoted
again by $\{f_n\}_{n\geq1}$ such that $f_n$ converges to some
$f_0\in L^2(\Omega)$ weakly in $L^2(\Omega)$. Moreover, since $K$ and $K_1$ are compact, we have
\be\label{compactidentity}
\lim_{m,n\to\infty}\|Kf_n-Kf_m\|_{L^2(\Gamma_0\times[0,T])}=0, \quad \lim_{m,n\to\infty}\|K_1f_n-K_1f_m\|_{L^2(\Gamma_0\times[0,T])}=0
\ee
On the other hand, it follows from $\eqref{ineq11}$ that
\be\label{cauthyf}
\begin{split}
\|f_n-f_m\|_{L^2(\Omega)}& \leq C\|u(f_n)-u(f_m)\|_{X}+C\|Kf_n-Kf_m\|_{L^2(\Gamma_0\times[0,T])} \\
& \qquad +C\|K_1f_n-K_1f_m\|_{L^2(\Gamma_0\times[0,T])} \\
& \leq C\|u(f_n)\|_{X}+C\|u(f_m)\|_{X}+C\|Kf_n-Kf_m\|_{L^2(\Gamma_0\times[0,T])} \\
& \qquad +C\|K_1f_n-K_1f_m\|_{L^2(\Gamma_0\times[0,T])}
\end{split}
\ee
Thus by $\eqref{contrary2}$ and $\eqref{compactidentity}$,
we have that \be\label{iden1}
\lim_{m,n\to\infty}\|f_n-f_m\|_{L^2(\Omega)}=0 \ee
and hence $f_n$ converges strongly to $f_0$ in $L^2(\Omega)$.
So by $\eqref{contrary1}$ we obtain \be\label{iden3}
\|f_0\|_{L^2(\Omega)}=1 \ee
On the other hand, by $\eqref{crucialR}$
and a usual a-priori estimate, we have that \be
\begin{split}
\|\bar{w}(f)\|_{C([0,T];H^1(\Omega))}+\|\bar{w}_t(f)\|_{C([0,T];L^2(\Omega))} & \leq C\|fR_t\|_{L^1(0,T;L^2(\Omega))} \\
                                                                              & \leq C\|R_t\|_{L^1(0,T;L^{\infty}(\Omega))}\|f\|_{L^2(\Omega)}
\end{split}
\ee
Hence trace theorem implies that \be\label{traceineq}
\|\bar{w}(f)\|_{L^2(\Gamma_0\times[0,T])}\leq C\|f\|_{L^2(\Omega)} \ee
where $C>0$ depends on $\|R_t\|_{L^1(0,T;L^{\infty}(\Omega))}$.
Therefore by $\eqref{traceineq}$ we have \be\label{ineq12}
\lim_{n\to\infty}\|\bar{w}(f_n)-\bar{w}(f_0)\|_{L^2(\Gamma_0\times[0,T])}\leq
C\lim_{n\to\infty}\|f_n-f_0\|_{L^2(\Omega)}=0 \ee
Moreover, by $\eqref{contrary2}$ and the coupling $\bar{w}(x,t) = -u_{tt}(x,t)-\Delta^2 u(x,t)-\Delta^2 u_t(x,t)$
on $\Gamma_0\times[0,T]$, we have
\be\label{iden4}
\lim_{n\to\infty}\|\bar{w}(f_n)\|_{L^2(\Gamma_0\times[0,T])}\leq \lim_{n\to\infty}\|u\|_X=0 \ee
Thus by $\eqref{ineq12}$ and $\eqref{iden4}$, we obtain
\be\label{last} \bar{w}(f_0)(x,t)=0, \quad x\in\Gamma_0, t\in[0,T] \ee
Therefore from $\eqref{lineary}$ we have $u=u(f_0)$ satisfies the initial boundary problem:
\begin{equation}\label{equationu}
\begin{cases}
-u_{tt}(x,t)-\Delta^2 u(x,t)-\Delta^2 u_t(x,t) = 0 & \mbox{in } \Gamma_0\times[0,T] \\
u(x,t)=\frac{\pa u}{\pa \nu}(x,t)=0 & \mbox{on } \pa\Gamma_0\times[0,T] \\
u(\cdot,\frac{T}{2}) = 0 & \mbox{in } \Gamma_0 \\
u_t(\cdot,\frac{T}{2}) = 0 & \mbox{in } \Gamma_0
\end{cases}
\end{equation}
which has only zero solution, namely, we have
$u(f_0)(x,t)=0, x\in\Gamma_0, t\in [0,T]$. Therefore by the
uniqueness theorem \ref{th1}, we have
$f_0\equiv0$ in $\Omega$ which contradicts with $\eqref{iden3}$. Thus we must have
\begin{equation}
\|f\|_{L^2(\Omega)}\leq C\left(\|u_{tt}\|_{L^2(\Gamma_0\times[0,T])}+\|u_{ttt}\|_{L^2(\Gamma_0\times[0,T])}+\|\Delta^2u_{tt}\|_{L^2(\Gamma_0\times[0,T])}\right)
\end{equation}
and the proof of the theorem is complete. \hfill $\Box$

\section{Proof of Theorem \ref{th4}}

We now go back to the original system $\eqref{nonlinear}$.

\textbf{Case 1: n=2}.
Let $\bar{z}=z_t$ and $\bar{v}=v_t$, then the system $\eqref{nonlinear}$ becomes to
\begin{equation}\label{nonlinear2}
\begin{cases}
\bar{z}_{tt}(x,t) = \Delta \bar{z}(x,t) + q(x)\bar{z}(x,t) & \mbox{in } \Omega \times [0,T] \\
\frac{\pa \bar{z}}{\pa\nu}(x,t) = 0 & \mbox{on } \Gamma_1\times[0,T] \\
\bar{z}_t(x,t) = -\bar{v}_{tt}(x,t)-\Delta^2 \bar{v}(x,t)-\Delta^2 \bar{v}_t(x,t) & \mbox{on } \Gamma_0\times[0,T] \\
\bar{v}(x,t)=\frac{\pa \bar{v}}{\pa \nu}(x,t)=0 & \mbox{on } \pa\Gamma_0\times[0,T] \\
\frac{\pa \bar{z}}{\pa \nu}(x,t)=\bar{v}_t(x,t) & \mbox{on } \Gamma_0\times[0,T] \\
\bar{z}(\cdot,\frac{T}{2}) = z_1(x) & \mbox{in } \Omega \\
\bar{z}_t(\cdot,\frac{T}{2}) = \Delta z_0(x)+q(x)z_0 & \mbox{in } \Omega \\
\bar{v}(\cdot,\frac{T}{2}) = v_1(x) & \mbox{on } \Gamma_0 \\
\bar{v}_t(\cdot,\frac{T}{2}) = -z_1(x)-\Delta^2 v_0(x)-\Delta^2 v_1(x) & \mbox{on } \Gamma_0
\end{cases}
\end{equation}
By using the similar operator setting as in section 1.2 and notice the new initial conditions, we can compute the domain of the operator $\mathcal{A}^2$:
\begin{equation}\label{DomA2}
\begin{split}
D(\mathcal{A}^2)& =\{[z_0,z_1,v_0,v_1]^T:(z_1,(-A_N+q)z_0+Bv_1,v_1,-\textbf{\AA}(v_0+v_1)-B^{*}z_1\in D(\mathcal{A})\} \\
                & =\{[z_0,z_1,v_0,v_1]^T: z_1\in H^2_{\Gamma_1}(\Omega), (-A_N+q)z_0+Bv_1\in H^1_{\Gamma_1}(\Omega), v_0\in H^2_0(\Gamma_0), \\
& \qquad v_1\in H^2_0(\Gamma_0), \textbf{\AA}(v_0+v_1)+B^{*}z_1\in H^2_0(\Gamma_0), \\
& \qquad v_1-\textbf{\AA}(v_0+v_1)-B^{*}z_1\in D(\textbf{\AA}), \frac{\pa z_1}{\pa\nu}|_{\Gamma_0}=-\textbf{\AA}(v_0+v_1)-B^{*}z_1\} \\
                & =\{[z_0,z_1,v_0,v_1]^T: z_1\in H^2_{\Gamma_1}(\Omega),(\Delta+q)z_0\in H^1_{\Gamma_1}(\Omega), \frac{\pa z_0}{\pa\nu}|_{\Gamma_0}=v_1, \\
& \qquad v_0\in H^2_0(\Gamma_0), v_1\in H^2_0(\Gamma_0),\textbf{\AA}(v_0+v_1)+B^{*}z_1\in H^2_0(\Gamma_0), \\
& \qquad v_1-\textbf{\AA}(v_0+v_1)-B^{*}z_1\in D(\textbf{\AA}), \frac{\pa z_1}{\pa\nu}|_{\Gamma_0}=-\textbf{\AA}(v_0+v_1)-B^{*}z_1\} \\
                & =\{[z_0,z_1,v_0,v_1]^T: z_0\in H^3_{\Gamma_1}(\Omega),z_1\in H^2_{\Gamma_1}(\Omega),v_0\in H^2_0(\Gamma_0), v_1\in H^2_0(\Gamma_0), \\
& \qquad\textbf{\AA}(v_0+v_1)+B^{*}z_1\in H^2_0(\Gamma_0), v_1-\textbf{\AA}(v_0+v_1)-B^{*}z_1\in D(\textbf{\AA}), \\
& \qquad\frac{\pa z_0}{\pa\nu}|_{\Gamma_0}=v_1, \frac{\pa z_1}{\pa\nu}|_{\Gamma_0}=-\textbf{\AA}(v_0+v_1)-B^{*}z_1\}
\end{split}
\end{equation}
where in the last step $z_0\in H^3_{\Gamma_1}(\Omega)$ is from elliptic theory when provided that $q(x)\in W^{1,\infty}(\Omega)$.
Therefore when $z_0\in H^3_{\Gamma_1}(\Omega)$, $z_1\in H^2_{\Gamma_1}(\Omega)$, $v_0\in H^2_0(\Gamma_0)$, $v_1\in H^2_0(\Gamma_0)$
with compatible conditions as in $D(\mathcal{A}^2)$ and $q\in W^{1,\infty}(\Omega)$, then from semigroup theory we have that the solution of $\eqref{nonlinear2}$ satisfies
\be\label{barztztt}
\bar{z}_t\in C([0,T];H^1(\Omega)), \quad \bar{z}_{tt}\in C([0,T];L^2(\Omega))
\ee
Hence we have on the one hand
\be\label{regzt11}
z_t\in H^1(0,T;H^1(\Omega))
\ee
On the other hand, from $\eqref{barztztt}$ and $\bar{z}_{tt}(x,t) = \Delta \bar{z}(x,t) + q(x)\bar{z}(x,t)$, we have by elliptic theory that
\be\label{regzt02}
z_t=\bar{z}\in L^2(0,T;H^2(\Omega))
\ee
Interpolate between $\eqref{regzt11}$ and $\eqref{regzt02}$, we have for $0<\epsilon<\frac{1}{2}$,
\be
z_t\in H^{\frac{1}{2}+\epsilon}(0,T;H^{\frac{3}{2}-\epsilon}(\Omega))\subset H^{\frac{1}{2}+\epsilon}(0,T;L^{\infty}(\Omega))
\ee
where the inclusion is by Sobolev embedding theorem.

\textbf{Case 2: n=3}.
We let $\bar{\bar{z}}=z_{tt}$, $\bar{\bar{v}}=v_{tt}$, then we have $\bar{\bar{z}}$, $\bar{\bar{v}}$ satisfy
\begin{equation}\label{nonlinear3}
\begin{cases}
\bar{\bar{z}}_{tt}(x,t) = \Delta \bar{\bar{z}}(x,t) + q(x)\bar{\bar{z}}(x,t) & \mbox{in } \Omega \times [0,T] \\
\frac{\pa \bar{\bar{z}}}{\pa\nu}(x,t) = 0 & \mbox{on } \Gamma_1\times[0,T] \\
\bar{\bar{z}}_t(x,t) = -\bar{\bar{v}}_{tt}(x,t)-\Delta^2 \bar{\bar{v}}(x,t)-\Delta^2 \bar{\bar{v}}_t(x,t) & \mbox{on } \Gamma_0\times[0,T] \\
\bar{\bar{v}}(x,t)=\frac{\pa \bar{\bar{v}}}{\pa \nu}(x,t)=0 & \mbox{on } \pa\Gamma_0\times[0,T] \\
\frac{\pa \bar{\bar{z}}}{\pa \nu}(x,t)=\bar{\bar{v}}_t(x,t) & \mbox{on } \Gamma_0\times[0,T] \\
\bar{\bar{z}}(\cdot,\frac{T}{2}) = \Delta z_0(x)+q(x)z_0 & \mbox{in } \Omega \\
\bar{\bar{z}}_t(\cdot,\frac{T}{2}) = \Delta z_1(x)+q(x)z_1 & \mbox{in } \Omega \\
\bar{\bar{v}}(\cdot,\frac{T}{2}) = -z_1(x)-\Delta^2 v_0(x)-\Delta^2 v_1(x) & \mbox{on } \Gamma_0 \\
\bar{\bar{v}}_t(\cdot,\frac{T}{2}) = -\Delta z_0(x)-q(x)z_0(x)-\Delta^2 v_1(x) \\
                                     \hspace*{1.0 in} +\Delta^2 z_1(x)+\Delta^4 v_0(x)+\Delta^4 v_1(x) & \mbox{on } \Gamma_0
\end{cases}
\end{equation}
Then still using the similarly operator setting as before we can compute the domain of $\mathcal{A}^3$:
\begin{equation}\label{DomA3}
\begin{split}
D(\mathcal{A}^3)& =\{[z_0,z_1,v_0,v_1]^T:(z_1,(-A_N+q)z_0+Bv_1,v_1,-\textbf{\AA}(v_0+v_1)-B^{*}z_1\in D(\mathcal{A}^2)\} \\
                & =\{[z_0,z_1,v_0,v_1]^T: z_1\in H^3_{\Gamma_1}(\Omega), (\Delta+q)z_0\in H^2_{\Gamma_1}(\Omega), \frac{\pa z_0}{\pa\nu}|_{\Gamma_0}=v_1, \\
& v_0\in H^2_0(\Gamma_0), v_1\in H^2_0(\Gamma_0), \textbf{\AA}(v_0+v_1)+B^{*}z_1\in H^2_0(\Gamma_0), \\
& \textbf{\AA}(v_1-\textbf{\AA}(v_0+v_1)-B^{*}z_1)+B^{*}[(-A_N+q)z_0+Bv_1]\in H^2_0(\Gamma_0), \\
& \textbf{\AA}(v_0+v_1)+B^{*}z_1+\textbf{\AA}[v_1-\textbf{\AA}(v_0+v_1)-B^{*}z_1]+B^{*}[(-A_N+q)z_0+Bv_1]\in D(\textbf{\AA}) \\
& \frac{\pa z_1}{\pa\nu}|_{\Gamma_0}=-\textbf{\AA}(v_0+v_1)-B^{*}z_1, \frac{\pa [(-A_N+q)z_0+Bv_1]}{\pa\nu}|_{\Gamma_0}= \\
& -\textbf{\AA}[v_1-\textbf{\AA}(v_0+v_1)-B^{*}z_1]-B^{*}[(-A_N+q)z_0+Bv_1]\} \\
                & =\{[z_0,z_1,v_0,v_1]^T: z_0\in H^{\frac{7}{2}}_{\Gamma_1}(\Omega), z_1\in H^3_{\Gamma_1}(\Omega), v_0\in H^2_0(\Gamma_0), v_1\in H^2_0(\Gamma_0), \\
& \textbf{\AA}(v_0+v_1)+B^{*}z_1\in H^2_0(\Gamma_0), \frac{\pa z_0}{\pa\nu}|_{\Gamma_0}=v_1, \frac{\pa z_1}{\pa\nu}|_{\Gamma_0}=-\textbf{\AA}(v_0+v_1)-B^{*}z_1 \ \text{on} \ \Gamma_0, \\
& \textbf{\AA}(v_0+v_1)+B^{*}z_1+\textbf{\AA}[v_1-\textbf{\AA}(v_0+v_1)-B^{*}z_1]+B^{*}[(-A_N+q)z_0+Bv_1]\in D(\textbf{\AA}) \\
& \textbf{\AA}(v_1-\textbf{\AA}(v_0+v_1)-B^{*}z_1)+B^{*}[(-A_N+q)z_0+Bv_1]\in H^2_0(\Gamma_0), \\
& \frac{\pa [(-A_N+q)z_0+Bv_1]}{\pa\nu}|_{\Gamma_0}=-\textbf{\AA}[v_1-\textbf{\AA}(v_0+v_1)-B^{*}z_1]-B^{*}[(-A_N+q)z_0+Bv_1]\}
\end{split}
\end{equation}
where in the last step $z_0\in H^{\frac{7}{2}}_{\Gamma_1}(\Omega)$ is from trace theory of solving $\frac{\pa z_0}{\pa\nu}|_{\Gamma_0}=v_1\in H^{2}(\Gamma_0)$.
Therefore when $z_0\in H^{\frac{7}{2}}_{\Gamma_1}(\Omega)$, $z_1\in H^3_{\Gamma_1}(\Omega)$, $v_0\in H^2_0(\Gamma_0)$, $v_1\in H^2_0(\Gamma_0)$
with compatible conditions as in $D(\mathcal{A}^3)$ and $q\in W^{2,\infty}(\Omega)$, then from semigroup theory we have that the solution of $\eqref{nonlinear2}$ satisfies
\be\label{barbarztztt}
\bar{\bar{z}}_t\in C([0,T];H^1(\Omega)), \quad \bar{\bar{z}}_{tt}\in C([0,T];L^2(\Omega))
\ee
Hence we have on the one hand
\be\label{regzt21}
z_t\in H^2(0,T;H^1(\Omega))
\ee
On the other hand, from $\eqref{barbarztztt}$ and $\bar{\bar{z}}_{tt}(x,t) = \Delta \bar{\bar{z}}(x,t) + q(x)\bar{\bar{z}}(x,t)$, we have by elliptic theory that
\be
z_{tt}=\bar{\bar{z}}\in L^2(0,T;H^2(\Omega))
\ee
which implies
\be\label{regzt12}
z_t\in H^1(0,T;H^2(\Omega))
\ee
Now interpolate between $\eqref{regzt21}$ and $\eqref{regzt12}$, we have for $0<\epsilon<\frac{1}{2}$,
\be
z_t\in H^{\frac{3}{2}}(0,T;H^{\frac{3}{2}}(\Omega))\subset H^{\frac{1}{2}+\epsilon}(0,T;L^{\infty}(\Omega))
\ee
where the inclusion is again by Sobolev embedding.

Hence in either case $n=2$ or $n=3$, under the assumptions on the initial data $[z_0,z_1,v_0,v_1]$ and $q(x)$, $p(x)$ in Theorem \ref{th4},
we have that $z_t\in H^{\frac{1}{2}+\epsilon}(0,T;L^{\infty}(\Omega))$. Thus when we again set
$f(x)=q(x)-p(x)$, $w(x,t)=z(q)(x,t)-z(p)(x,t)$, $u(x,t)=v(q)(x,t)-v(p)(x,t)$ and $R(x,t)=z(p)(x,t)$
as in section 4, we obtain $\eqref{regRt}$ that $R_{t}\in H^{\frac{1}{2}+\epsilon}(0,T;L^{\infty}(\Omega))$ and hence
all the assumptions in theorem \ref{th3} are satisfied. Therefore, we get the desired stability $\eqref{nonlinearstability}$ from the stability $\eqref{stability}$ of
the linear inverse problem in Theorem \ref{th3}. \hfill $\Box$

\section{Concluding remark}
As we mentioned at the beginning and the calculations of $D(\mathcal{A}^2)$ and $D(\mathcal{A}^3)$ show, the lack of compactness
of the resolvent limits the space regularity of the solutions for the wave equation parts since we always have the elliptic
problem for $z$ or $z_t$ such that $(\Delta+q)z\in L^2(\Omega)$ with $\frac{\pa z}{\pa\nu}\in H^2_0(\Gamma_0)$ provided $q$ in some suitable space.
Therefore the best space regularity that $z$ could get is $2+\frac{3}{2}=\frac{7}{2}$ from elliptic and trace theory. As a result, our argument of the
stability in the nonlinear inverse problem will only work for dimension up to $n=7$ as we need the Sobolev embedding $H^{\frac{n}{2}}(\Omega)\subset L^{\infty}(\Omega)$
in order to achieve the space regularity of $z_t$ in $L^{\infty}(\Omega)$ which is needed in the proof.

\end{document}